\def\R{\mathbb R}
\def\G{\mathbb G}
\def\H{\mathbb H}
\newcommand{\bigzero}{\mbox{\normalfont\Large\bfseries 0}}
\def\MCP{{\rm MCP}}
\numberwithin{equation}{section}
\newtheorem{theorem}{Theorem}
\newtheorem{lemma}[theorem]{Lemma}
\newtheorem{proposition}[theorem]{Proposition}
\newtheorem{definition}[theorem]{Definition\rm}
\newtheorem{remark}[theorem]{Remark}
\begin{document}

\title{Measure contraction properties for two-step analytic sub-Riemannian structures and Lipschitz Carnot groups}
\footnotetext{Both authors are supported by the ANR project SRGI ``Sub-Riemannian Geometry and Interactions'', ANR-15-CE40-0018. }


\author{Z.~Badreddine\thanks{Universit\'e C\^ote d'Azur, CNRS, Inria, Labo. J.-A. Dieudonn\'e, UMR CNRS 7351, Parc Valrose 06108 Nice, Cedex 2, France ({\tt Zeinab.Badreddine@unice.fr})} \and L.~Rifford\thanks{Universit\'e C\^ote d'Azur, CNRS, Inria, Labo. J.-A. Dieudonn\'e, UMR CNRS 7351, Parc Valrose 06108 Nice, Cedex 2, France ({\tt
      Ludovic.Rifford@math.cnrs.fr})}}



\maketitle

\begin{abstract}
We prove that two-step analytic sub-Riemannian structures on a compact analytic manifold equipped with a smooth measure and Lipschitz Carnot groups satisfy measure contraction properties.
\end{abstract}

\section{Introduction}\label{SECintroduction}
\label{Intro}
The aim of this paper is to provide new examples of sub-Riemannian structures satisfying measure contraction properties. Let $M$ be a smooth manifold of dimension $n\geq 3$ equipped with a sub-Riemannian structure $(\Delta,g)$ of rank $m<n$, whose geodesic distance $d_{SR}$ is supposed to be complete. We refer the reader to Appendix \ref{Notations} for the notations used throughout the paper. As in the previous paper of the second author on the same subject \cite{rifford13}, we restrict our attention to the notion of measure contraction properties in metric measured spaces with negligeable cut loci (if $A\subset M$ is a Borel set then $\mathcal{L}^n (A) =0$ means that $A$ has vanishing $n$-dimensional Lebesgue measures in charts):

\begin{definition}
We say that the sub-Riemannian structure $(\Delta,g)$ on $M$ has negligeable cut loci if for every $x\in M$, there is a measurable set $\mathcal{C}(x) \subset M$ with  
$$
\mathcal{L}^n \left( \mathcal{C}(x)\right) =0,
$$
and a measurable map $\gamma_{x} \, : \, \left( M \setminus \mathcal{C}(x)\right) \times [0,1] \longrightarrow M$ such that for every $y \in  M \setminus \mathcal{C}(x)$ the curve 
$$
s \in [0,1] \longmapsto \gamma_x (s,y)
$$
is the unique minimizing geodesic from $x$ to $y$. 
\end{definition}

Measure contraction properties consists in comparing the contraction of volumes along minimizing geodesics from a given point with what happens in classical model spaces of Riemannian geometry. We recall that  for every $K\in \R$, the comparison function $s_K: [0,+\infty) \rightarrow [0,+\infty)$ ($s_K: [0,\pi/\sqrt{K})\rightarrow [0,+\infty)$ if $K>0$) is defined by
$$
s_K(t) := \left\{ \begin{array}{lll}
\frac{\sin(\sqrt{K}t)}{\sqrt{K}} & \mbox{ if } K>0\\
t & \mbox{ if } K=0 \\
\frac{\sinh(\sqrt{-K}t)}{\sqrt{-K}} & \mbox{ if } K<0.
\end{array}
\right.
$$
In our setting, the following definition is equivalent to the notion of measure contraction property introduced by Ohta in \cite{ohta07}  for more general measured metric spaces (see also \cite{sturm06b}). 

\begin{definition}
Let $(\Delta,g)$ be a sub-Riemannian structure on $M$ with negligeable cut loci, $\mu$ a measure absolutely continuous with respect to $\mathcal{L}^n$ and $K\in \R, N> 1$ be fixed. We say that $(\Delta,g)$ equipped with $\mu$ satisfies $\MCP(K,N)$ if for every $x\in M$ and every measurable set $A\subset M \setminus \mathcal{C}(x)$ (provided that $A\subset B_{SR}(x,\pi\sqrt{N-1/K})$ if $K>0$) with $0< \mu(A)<\infty$,
$$
\mu \left(A_s\right)  \geq  \int_{A} s \left[  \frac{s_K\left(sd_{SR}(x,z)/\sqrt{N-1}\right)}{s_K\left(d_{SR}(x,z)/\sqrt{N-1}\right)} \right]^{N-1}  \, d\mu(z) \qquad \forall s \in [0,1],
$$
where $A_s$ is the $s$-interpolation of $A$ from $x$ defined by
$$
A_s := \Bigl\{ \gamma_x(s,y) \, \vert \, y \in A \setminus \mathcal{C}(x) \Bigr\} \qquad \forall s \in [0,1].
$$
In particular, $(\Delta,g)$ equipped with $\mu$  satisfies $\MCP(0,N)$ if for every $x\in M$ and every measurable set $A\subset M\setminus \mathcal{C}(x)$ with $0< \mu(A)<\infty$,
$$
\mu \left(A_s\right) \geq s^N \mu (A) \qquad \forall s \in [0,1].
$$
\end{definition}

To our knowledge, the first study of measure contraction properties in the sub-Riemannian setting has been performed by Juillet in his thesis. In \cite{juillet09}, Juillet proved that the $n$-th Heisenberg group $\H^n$ (with $n\geq 1$) equipped with its sub-Riemannian distance and the Lebesgue measure $\mathcal{L}^{2n+1}$ (in this case the ambiant space is $\R^{2n+1}$) satisfies $\mbox{MCP}(0,2n+3)$. This result is sharp for two reasons. First, Juillet proved that $\H^n$ does not satisfy any other stronger notion of "Ricci curvature bounded from below" in metric measured spaces such as for example the so-called curvature dimension property (see \cite{lv07,sturm06a,sturm06b,villanibook}). Secondly, Juillet showed that $2n+3$ is the optimal dimension for which $\H^n$ satisfies $\mbox{MCP}(0,N)$, there is no $N<2n+3$ such that $\H^n$ (equipped with $d_{SR}$ and $\mathcal{L}^{2n+1}$) satisfies $\mbox{MCP}(0,N)$. The Juillet's Theorem, which settled the case of the simplest sub-Riemannian structures, paved the way to the study of measure contraction properties for more general sub-Riemannian structures. In \cite{al14}, Agrachev and Lee investigated the case of sub-Riemannian structures associated with contact distributions in dimension $3$. In \cite{lee16,lcz16}, Lee and Lee, Li and Zelenko studied the particular case of Sasakian manifolds. In \cite{rifford13}, the second author proved that any ideal Carnot group satisfy $\mbox{MCP}(0,N)$ for some $N>1$ (it has been shown later by Rizzi \cite{rizzi16} that a Carnot group is ideal if and only if it is fat). In \cite{rizzi16}, Rizzi showed that any co-rank $1$ Carnot group of dimension $k+1$ (equipped with the sub-Riemannian distance and a left-invariant measure) satisfies $\mbox{MCP}(0,k+3)$. Finally, more recently, Barilari and Rizzi \cite{br17} proved that $H$-type Carnot groups of rank $k$ and dimension $n$ satisfy $\mbox{MCP}(0,k+3(n-k))$. The purpose of the present paper is to pursue the qualitative approach initiated by the second author in \cite{rifford13}. We aim to show that some assumptions on the sub-Riemannian structure insure that the sub-Riemannian distance enjoyes some properties which guarantee that some measure contraction property of the form $\mbox{MCP}(0,N)$ is satisfied for some $N>1$ (in fact $N$ has to be greater or equal to the geodesic dimension of the sub-Riemannian structure as introduced by Rizzi \cite{rizzithesis}). Our approach is purely qualitative, we do not compute any curvature type quantity in order to find the best exponents. Our results are concerned with two-step analytic sub-Riemannian structures and Lipschitz Carnot groups.\\

Given a (real) analytic manifold $M$, we say that $(\Delta,g)$ is analytic if both $\Delta$ and $g$ are analytic on $M$. Moreover, we recall that a distribution $\Delta$, or a sub-Riemannian structure $(\Delta,g)$,  is {\it two-step} if 
$$
[\Delta,\Delta](x):= \Bigl\{ [X,Y](x) \, \vert \, X, Y \mbox{ smooth sections of } \Delta \Bigr\} =T_xM \qquad \forall x \in M.
$$
A measure on $M$ is called {\it smooth} if it is locally defined by a positive smooth density times the Lebesgue measure $\mathcal{L}^n$, our first result is the following:

\begin{theorem}\label{THMgen}
Every two-step analytic sub-Riemannian structure on a compact analytic manifold equipped with a smooth measure satisfies $\mbox{MCP}(0,N)$ for some $N>0$. 
\end{theorem}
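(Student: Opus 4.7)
The plan is to reduce the $\MCP(0,N)$ inequality to a pointwise lower bound on the Jacobian of the $s$-interpolation map $y\mapsto \gamma_x(s,y)$. Since $(\Delta,g)$ is two-step, a standard result (going back to Agrachev--Sarychev) guarantees that every minimizing geodesic is normal, so there is a well-defined sub-Riemannian exponential map $\exp_x:T_x^*M\to M$, analytic in $(x,p)$ by analyticity of the Hamiltonian. On the set of points $y$ that are reached by a unique minimizing geodesic not conjugate to $x$, one has $\gamma_x(s,y)=\exp_x(s\,\exp_x^{-1}(y))$; changing variables in the integral defining $\mu(A_s)$ therefore yields
$$
\mu(A_s)=\int_{A}J_s(y)\,d\mu(y),
$$
so it suffices to exhibit a uniform $N>0$ such that $J_s(y)\geq s^N$ for every $x\in M$, $\mu$-a.e.\ $y\notin \mathcal{C}(x)$, and every $s\in[0,1]$.

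First I would establish the negligible cut loci property. The cut locus $\mathcal{C}(x)$ decomposes into the set of conjugate values of $\exp_x$ (where its differential is singular) and the set of points reached by more than one minimizer. Both are, by analyticity of $\exp_x$ and compactness of $M$, contained in countable unions of proper analytic subvarieties of $M$; this uses in an essential way the two-step assumption, which excludes strictly abnormal minimizers and hence forbids the pathological singular geodesics that could create cut points of positive measure. Consequently $\mathcal{L}^n(\mathcal{C}(x))=0$ and a measurable selector $\gamma_x$ exists on the complement.

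The core of the proof is the uniform Jacobian estimate. The natural tool is the nilpotent approximation $\widehat{\Delta}_x$ at each point $x$, which in the two-step case is a two-step Carnot group $\widehat{G}_x$ equipped with its canonical dilations $\delta_s$. For the Carnot exponential $\widehat{\exp}_x$, scaling gives an explicit polynomial Jacobian: the interpolation Jacobian equals $s^{\widehat{N}}$ where $\widehat{N}$ is the geodesic (Hausdorff homogeneous) dimension of $\widehat{G}_x$. Using the analytic dependence of the exponential map on the base point, together with a Taylor/dilation comparison $\exp_x\circ\delta_s=\delta_s\circ\widehat{\exp}_x+O(s^{\widehat{N}+1})$ and the \L{}ojasiewicz inequality for analytic functions (to control the error term), one transfers the polynomial lower bound on $\widehat{J}_s$ to a lower bound $J_s(y)\geq c\,s^N$ with $N$ depending only on the maximal geodesic dimension of the nilpotent approximations along $M$ (which is finite by compactness and upper semicontinuity). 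Absorbing the constant $c$ by slightly enlarging $N$ and using $s\in[0,1]$ yields the desired $J_s(y)\geq s^N$.

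The main obstacle, and the step in which the analyticity and compactness hypotheses are genuinely used, is the passage from the infinitesimal polynomial behavior furnished by the nilpotent approximation to a global uniform bound $J_s(y)\geq s^N$ valid along arbitrarily long minimizing geodesics, not just for small $s$ or small $p$. The analyticity guarantees that the vanishing order of $J_s$ along any semi-analytic stratum of initial covectors is finite and, by compactness of $M$ and of the relevant cylinder of initial covectors, can be chosen uniformly; the \L{}ojasiewicz inequality prevents the Jacobian from collapsing faster than any polynomial near conjugate directions. Without analyticity one could not rule out infinitely flat degeneracy, so this is where the hypothesis is indispensable.
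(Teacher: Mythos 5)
Your reduction of $\MCP(0,N)$ to a pointwise Jacobian bound $J_s(y)\geq s^N$ for the interpolation map is sound and is essentially equivalent to the criterion the paper actually uses (Proposition \ref{PROPversus}: a uniform upper bound $N$ on $\mbox{div}^{\mu}(\nabla^h f^x)$ over $\mathcal{O}_x$, which integrates along the flow of $-\nabla^h f^x$ to exactly this Jacobian estimate). The negligible-cut-loci step is also reachable, though not by the route you sketch: the set of points reached by several minimizers is not an analytic subvariety in any evident sense; what actually works (and is what the paper does) is that two-step implies $d_{SR}^2$ is locally Lipschitz (Agrachev--Lee), hence differentiable a.e., which by Proposition \ref{PROPSardminEQ} gives the minimizing Sard property and negligible cut loci.

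The genuine gap is in your core step. First, the assertion that the interpolation Jacobian of a two-step Carnot group equals $s^{\widehat N}$ with $\widehat N$ the geodesic dimension is itself a hard quantitative theorem, known only for special classes (Heisenberg, corank one, $H$-type); the nilpotent approximation of a general two-step structure is a general two-step Carnot group, so you are assuming at the infinitesimal level essentially the statement you are trying to prove, and note that the geodesic dimension is not the homogeneous Hausdorff dimension. Second, the globalization via \L{}ojasiewicz does not close: the analytic function $(s,x,p)\mapsto J_s$ vanishes not only at $s=0$ but also at interior conjugate points of the compact cylinder of covectors, where no lower bound of the form $c\,s^N$ can hold; excluding those points requires restricting to minimizing covectors, a set which is not semi-analytic, so compactness plus \L{}ojasiewicz gives nothing uniform. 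Third, an additive error $O(s^{\widehat N+1})$ on the map does not yield a multiplicative lower bound on its Jacobian, since the Jacobian of the model is itself of order $s^{\widehat N}$ and degenerates anisotropically. The paper avoids all of this by never touching the Jacobian directly: it proves a uniform \emph{one-sided} second-order bound (nearly horizontal semiconcavity of $d_{SR}(x,\cdot)^2$, Proposition \ref{PROPhsc}) via a quantitative open-mapping estimate for the end-point map along normal minimizers, with analyticity entering only through the integral inequality of Lemma \ref{benin4j} (Denef--Van den Dries); combined with the Lipschitz bound this controls $\mbox{div}^{\mu}(\nabla^h f^x)$ from above, which is all that $\MCP(0,N)$ requires. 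To repair your argument you would need to replace the nilpotent-approximation/\L{}ojasiewicz step by some such uniform one-sided estimate.
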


In the case of Carnot groups which are, as Lie groups equipped with left-invariant sub-Riemannian structures, analytic manifolds with analytic sub-Riemannian structures, the homogeneity allows us to extended the above result to left-invariant Lipschitz distributions. \\

Following \cite{riffordbook}, we say that a sub-Riemannian structure $(\Delta,g)$ or a Carnot group whose first layer $\Delta$ is equipped with a left-invariant metric, is {\it Lipschitz} if it is complete and the associated geodesic distance $d_{SR} : M\times M \rightarrow \R$ is locally Lipschitz outside of the diagonal $D=\{(x,y) \in M\times M\, \vert \, x=y \}$. Examples of Lipschitz sub-Riemannian structures include two-step distributions and more generally medium-fat distributions. A distribution $\Delta$ (or a sub-Riemannian structure with distribution $\Delta$ or a Carnot group whose first layer $\Delta$ is equipped with a left-invariant metric) is called \textit{medium-fat}  if, for every $x\in M$ and every smooth section $X$ of $\Delta$ with $X(x) \neq 0$, there holds
\begin{eqnarray}\label{EQmediumfat}
T_xM = \Delta(x)+ [\Delta,\Delta](x) + \bigl[X,[\Delta,\Delta] \bigr](x),
\end{eqnarray}
where 
$$
 \bigl[X,[\Delta,\Delta] \bigr] (x) := \Bigl\{ \bigl[X,[Y,Z]\bigr] (x) \, \vert \,  Y, Z \mbox{ smooth sections of } \Delta \Bigr\}.
$$
The notion of medium-fat distribution has been introduced by Agrachev and Sarychev in \cite{as99}.    Of course, in the case of a Carnot group the property of being medium-fat depends only on the properties of its Lie algebra. Our second result is the following:

\begin{theorem}\label{THMCarnot}
Any Lipschitz Carnot group whose first layer is equipped with a left-invariant metric and equipped with Haar measure satisfies $\mbox{MCP}(0,N)$ for some $N>0$. 
\end{theorem}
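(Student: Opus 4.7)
The plan is to run the general strategy of Theorem \ref{THMgen} but to substitute the compactness and two-step hypotheses by two features that Lipschitz Carnot groups supply for free: the homogeneity under the Carnot dilations $\delta_s$ and the left-invariance of both $d_{SR}$ and the Haar measure $\mu$. First I would use this left-invariance to reduce the $\MCP(0,N)$ inequality to a single base point, the identity element $e$, so that the Lipschitz assumption on $d_{SR}$ near the diagonal becomes a statement about $d_{SR}(e,\cdot)$ on a punctured neighborhood of $e$, transported globally by dilation.

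Next I would verify that $\mathcal{C}(e)$ is $\mu$-negligible and identify a measurable selection of unique minimizing geodesics on $G\setminus\mathcal{C}(e)$. The Lipschitz hypothesis means $d_{SR}(e,\cdot)$ is locally Lipschitz on $G\setminus\{e\}$, so by Rademacher it is differentiable almost everywhere; at every such point of differentiability the minimizer from $e$ is unique and normal, hence parametrized by a unique initial covector $p(y)\in T_e^*G$ and written $\gamma_e(s,y)=\exp_{SR}(sp(y))$. Combining this with the analyticity of the group (a Carnot group is a real-analytic manifold with analytic sub-Riemannian structure) gives a subanalytic description of the bad set, which is then automatically of dimension less than $n$ and hence $\mu$-negligible.

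The heart of the argument is then a pointwise Jacobian estimate for the interpolation map $\Phi_s\colon y\mapsto \gamma_e(s,y)$, of the form $|\det d_y\Phi_s|\geq C\,s^{N}$ valid for almost every $y$, with $C$ and $N$ independent of $y$. Here I would exploit the polynomial character of $\exp_{SR}$ in exponential coordinates together with the dilation covariance $\exp_{SR}(s\,\delta_{1/s}^*p)=\delta_s\bigl(\exp_{SR}(p)\bigr)$: morally, $\Phi_s$ factors as $\delta_s$ (whose Jacobian equals $s^Q$, where $Q$ is the homogeneous dimension) composed with a smooth correction coming from the reparametrization $p\mapsto \delta_{1/s}^*p$ of covectors, and the Jacobian of the latter is bounded below on the good set. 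Integrating the pointwise estimate over $A$ gives $\mu(A_s)\geq s^N\mu(A)$, which is $\MCP(0,N)$.

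The main obstacle is to make the Jacobian lower bound genuinely uniform over an unbounded set of covectors $p(y)$. The idea, forced by the Lipschitz hypothesis, is exactly to use the dilation covariance to reduce the estimate for arbitrary $y$ to an estimate at a single bounded generic point, where analyticity and Lipschitz regularity together rule out the degenerate behavior (strictly abnormal terminal geodesics, singular points of the exponential) that would otherwise spoil a polynomial bound. This reduction step, and the verification that the correction Jacobian arising from $p\mapsto \delta_{1/s}^*p$ is uniformly bounded away from zero off $\mathcal{C}(e)$, are the delicate technical points of the proof.
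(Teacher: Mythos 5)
The central step of your argument---the pointwise bound $|\det d_y\Phi_s|\geq C\,s^N$ obtained by factoring $\Phi_s$ as $\delta_s$ composed with a ``correction'' whose Jacobian is ``bounded below on the good set''---does not work as stated, and the failure is not merely technical. If the correction Jacobian were bounded below uniformly in $s$, you would obtain $\mbox{MCP}(0,Q)$ with $Q=\sum_i d_i$ the homogeneous dimension (the Jacobian of $\delta_s$ being $s^Q$). This is false already for the Heisenberg group $\H^n$: there $Q=2n+2$, while Juillet proved that the optimal exponent is $2n+3$ and that no $N<2n+3$ works. The correction factor coming from the reparametrization of covectors necessarily degenerates as $s\to 0$ (this is why the geodesic dimension exceeds $Q$ on every nontrivial Carnot group), and the whole difficulty is to control, uniformly over the generic set of initial covectors, the order of vanishing in $s$ of $\det d_y\Phi_s$ and the size of the leading coefficient. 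Your proposal names the reduction to a bounded set of covectors as the delicate point, but even after that reduction (which the paper performs via dilation invariance of $\mbox{div}^{\mu}(\nabla^h f^0)$, restricting attention to the unit sphere $S_{SR}(0,1)$) you supply no mechanism for a lower bound on the Jacobian that is uniform in the covector \emph{and} in $s$.

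The paper fills exactly this gap by a different device: it proves that $d_{SR}(0,\cdot)^2$ is $C$-\emph{nearly horizontally semiconcave} with a uniform constant on a compact neighborhood of the unit sphere (Proposition 9). This is where analyticity enters in an essential, quantitative way---through a uniform Cauchy--Schwarz-defect inequality for analytic families (Lemma 11, resting on Denef--Van den Dries)---and it yields a one-sided bound $X^i\cdot(X^i\cdot f^0)\leq B|\nabla f^0|+B$ on second horizontal derivatives, hence $\mbox{div}^{\mu}(\nabla^h f^0)\leq N$ on $\mathcal{O}_0\cap S_{SR}(0,1)$, which by homogeneity and the flow characterization of Proposition 6 is equivalent to $\mbox{MCP}(0,N)$. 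Your Jacobian formulation is equivalent to this divergence bound (integrate $\frac{d}{dt}\log\det$ along the flow of $-\nabla^h f^0$), so the two viewpoints agree; what is missing from your proposal is the actual source of the uniform estimate. Your preliminary steps (reduction to the identity, negligibility of the cut locus via Rademacher, normality and uniqueness of minimizers at points of differentiability) are fine, although the claimed subanalyticity of the bad set is unnecessary and unproven; the paper gets negligibility directly from differentiability a.e.\ together with Sard's theorem (Proposition 4).
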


The proofs of Theorem \ref{THMgen} and \ref{THMCarnot} are based on the fact that squared sub-Riemannian pointed distances $d_{SR}(x,\cdot)^2$ satisfy a certain property of horizontal semiconcavity.  Note that for the moment, we are only able to prove this property in the analytic case under an assumption of compactness of length minimizers. The property of horizontal semiconcavity together with the lipschitzness of $d_{SR}(x,\cdot)^2$ allows us to give an upper bound for divergence of horizontal gradients of $f^x$ which implies the desired measure contraction property. \\

It is worth to notice that, thanks to a seminal result by Cavaletti and Huesmann \cite{ch15}, measure contraction properties are strongly connected with the well-posedness of the Monge problem for quadratic geodesic distances. We refer the interested reader to \cite{badreddine17,badreddinethesis} for further details.\\

We recall that all the notations used throughout the paper are listed in Appendix \ref{Notations}. The material required for the proof of the two theorems above  is worked out in Section \ref{SECprel}. The proofs of Theorems \ref{THMgen} and \ref{THMCarnot} are respectively given in Sections \ref{SECproofTHMgen} and \ref{SECproofTHMCarnot}.\\

{\bf Acknowledgement.} The authors are grateful to the referee for useful remarks and for pointing out a gap in the initial proof of Proposition 12. The authors are also indebted to Adam Parusinski for fruitful discussions and the reference to the paper by Denef and Van den Dries \cite{dvdd88}.

\section{Preliminaries}\label{SECprel}

Throughout all this section,  $(\Delta,g)$ denotes a complete sub-Riemannian structure on $M$ of rank $m\leq n$. 

\subsection{The minimizing Sard conjecture}

The minimizing Sard conjecture is concerned with the size of points that can be  reached from a given point by singular minimizing geodesics. Following \cite{riffordbourbaki}, given $x\in M$, we set
$$
\mathcal{S}^x_{\Delta,min^g} := \Bigl\{ \gamma(1) \, \vert \, \gamma \in W^{1,2}_{\Delta}([0,1],M), \gamma \mbox{ sing., }  d_{SR}(x,\gamma(1))^2=\mbox{energy}_{g} (\gamma) \Bigr\}.
$$
Note that for every $x \in M$, the set $\mathcal{S}^x_{\Delta,min^g}$ is closed and contains $x$ (because $m<n$). Let us introduce the following definition.

\begin{definition}\label{DEFSardmin}
We say that $(\Delta,g)$ satisfies the minimizing Sard conjecture at $x\in M$ if the set $\mathcal{S}^x_{\Delta,min^g}$  has Lebesgue measure zero in $M$. We say that it  satisfies the minimizing Sard conjecture if this property holds for any $x\in M$.
\end{definition}

It is not known if all complete sub-Riemannian structures satisfy the minimizing Sard conjecture (see \cite{agrachev14,riffordbourbaki}). The best general result is due to Agrachev who proved in \cite{agrachev09} that all closed sets  $\mathcal{S}^x_{\Delta,min^g}$ have empty interior. As the next result shows, the minimizing Sard conjecture is related to regularity properties of pointed distance functions. Following Agrachev  \cite{agrachev09}, we call smooth point of the function $y\mapsto d_{SR}(x,y)$ (for a fixed $x\in M$) any $y\in M$ for which there is $p\in T_x^*M$ which is not a critical point of the exponential mapping $\exp_x$ and such that the projection $\gamma_{x,p}$ of the normal extremal $\psi:[0,1] \rightarrow T^*M$ starting at $(x,p)$ is the unique minimizing geodesic from $x$ to $y=\gamma_{x,p}(1)$. By Agrachev's Theorem, the set $\mathcal{O}_x$ of smooth points is always open and dense in $M$. The following holds:

\begin{proposition}\label{PROPSardminEQ}
Let $x\in M$ be fixed, the following properties are equivalent:
\begin{itemize}
\item[(i)] the structure $(\Delta,g)$ satisfies the minimizing Sard conjecture at $x\in M$,
\item[(ii)] the function $y\mapsto d_{SR}(x,y)$ is differentiable almost everywhere in $M$,
\item[(iii)] the set of smooth points $\mathcal{O}_x$ is an open set with full measure in $M$. 
\end{itemize}
Furthermore, the function $y\mapsto d_{SR}(x,y)$ is smooth on $\mathcal{O}_x$ and if $M$ and $(\Delta,g)$ are analytic, then the set  $\mathcal{O}_x$ is geodesically star-shaped at $x$, that is 
\begin{eqnarray}\label{starshaped}
\gamma(s,y)\in \mathcal{O}_x \qquad \forall s\in (0,1], \, \forall y \in \mathcal{O}_x, 
\end{eqnarray}
where $\gamma_x (\cdot,y)\in W_{\Delta}^{1,2}([0,1],M)$ is the unique minimizing geodesic from $x$ to $y$. 
\end{proposition}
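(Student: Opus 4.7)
The plan is to establish the equivalence via the cycle (iii) $\Rightarrow$ (ii) $\Rightarrow$ (i) $\Rightarrow$ (iii), and then to handle the smoothness on $\mathcal{O}_x$ and the star-shaped property separately. Smoothness of $d_{SR}(x,\cdot)^2$ on $\mathcal{O}_x$ follows from the inverse function theorem: at a non-critical covector $p$ of $\exp_x$ there is a smooth local inverse $y\mapsto p(y)$, so $d_{SR}(x,y)^2=2\,\mathrm{energy}_g(\gamma_{x,p(y)})$ is smooth on the open neighborhood where the associated normal extremal remains the unique minimizer. This makes (iii) $\Rightarrow$ (ii) immediate. For (ii) $\Rightarrow$ (i), the Pontryagin Maximum Principle forces every minimizing geodesic from $x$ to a differentiability point $y\neq x$ to be the projection of a normal extremal with terminal covector $-\tfrac12 d_y d_{SR}(x,\cdot)^2$; a standard abnormal-multiplier argument shows that the additional existence of a singular minimizer to $y$ would produce a second element of the Fr\'echet supergradient at $y$, contradicting differentiability, so $\mathcal{S}^x_{\Delta,min^g}$ is contained in the negligible set of non-differentiability points.

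The main step is (i) $\Rightarrow$ (iii). By Agrachev's theorem $\mathcal{O}_x$ is always open and dense, so the task is to prove it has full measure. I would decompose $M\setminus(\mathcal{O}_x\cup\{x\})$ into: (a) the set $\mathcal{S}^x_{\Delta,min^g}$, negligible by (i); (b) the set of endpoints of minimizing normal extremals whose initial covector is a critical point of $\exp_x$, negligible by Sard's theorem applied to the smooth map $\exp_x$; and (c) the sub-Riemannian cut locus of points admitting more than one minimizer, handled through the local semiconcavity of $d_{SR}(x,\cdot)^2$ on $M\setminus \mathcal{S}^x_{\Delta,min^g}$ (in the spirit of Cannarsa-Rifford and Figalli-Rifford) combined with the classical measure-zero singular set property for semiconcave functions. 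Controlling (c) is the main technical obstacle, since one must invoke the full semiconcavity machinery without any prior knowledge of (ii) or (iii), and it is here that the structure of minimizing geodesics in the sub-Riemannian setting really matters.

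For the star-shaped property in the analytic case, given $y\in \mathcal{O}_x$ with non-critical initial covector $p$ and $s\in(0,1]$, a cut-and-paste argument yields uniqueness of the minimizer from $x$ to $\gamma_x(s,y)$: any competitor concatenated with $\gamma_x(\cdot,y)|_{[s,1]}$ would produce a distinct minimizing curve from $x$ to $y$, contradicting the defining uniqueness at $y$. Non-criticality of $sp$ for $\exp_x$ then follows from the classical fact that no conjugate time can occur strictly before the final time along a minimizing normal extremal, combined with non-criticality at time $1$. Analyticity is invoked to exclude the pathological situation in which the minimizer is simultaneously abnormal (which could otherwise force interior conjugate times and break the propagation of non-criticality), thereby ensuring that the Jacobi-type argument applies and $\gamma_x(s,y)\in \mathcal{O}_x$ for all $s\in(0,1]$.
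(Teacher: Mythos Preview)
Your overall scheme matches the paper's: the same cycle (iii)$\Rightarrow$(ii)$\Rightarrow$(i)$\Rightarrow$(iii), smoothness on $\mathcal{O}_x$ via the inverse function theorem, and for (i)$\Rightarrow$(iii) the same decomposition into $\mathcal{S}^x_{\Delta,min^g}$, a Sard set of critical values of $\exp_x$, and the non-differentiability set of a locally semiconcave function on $M\setminus\mathcal{S}^x_{\Delta,min^g}$ (the paper packages the last piece through the sets $\Sigma^k(f)=\{y:\mbox{diam}(d_y^+f)\geq 1/k\}$ and shows that the closure of their union, intersected with a compact $K$, is trapped in $(\cup_k\Sigma^k(f))\cup S_K$). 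Your star-shaped argument is likewise the paper's cut-and-paste plus analytic propagation of abnormality, and your explicit mention of the ``no interior conjugate time'' fact is a point the paper leaves implicit.

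The one place where your sketch departs from the paper is (ii)$\Rightarrow$(i), and there it is not correct as written. At a differentiability point $y$ the minimizer is unique and normal, as you say; the issue is whether this \emph{same} curve can also be singular (i.e.\ admit an abnormal lift). Your ``second supergradient'' mechanism does not apply here: there is no \emph{additional} minimizer, and an abnormal covector lies in $\Delta(y)^\perp$, is defined only projectively, and does not by itself furnish an element of $d_y^+f^x$, so differentiability alone does not rule out a normal--abnormal minimizer. The paper settles this with one extra use of Sard: the set $S$ of critical values of $\exp_x$ is null, and for $y$ in the full-measure set $D\setminus S$ the normal initial covector $p$ is non-critical; since $\exp_x$ factors smoothly through the end-point map via $p\mapsto u_p$, surjectivity of $d_p\exp_x$ forces surjectivity of the end-point differential at $u_p$, hence the curve is non-singular and $y\notin\mathcal{S}^x_{\Delta,min^g}$.
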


\begin{proof}[Proof of Proposition \ref{PROPSardminEQ}]
Let $x\in M$ be fixed. The part (iii) $\Rightarrow$ (ii) is immediate. Let us prove that  (ii) $\Rightarrow$ (i).  By assumption the set   of differentiability $D$ of $f:=d_{SR}(x,\cdot)$ has full measure in $M$. Recall that for every $y\in D$, there is a unique minimizing geodesic from $x$ to $y$ which is given by the projection of the normal extremal $\psi:[0,1] \rightarrow T^*M$ such that $\psi(1)=(y,d_{SR}(x,y)d_yf)$ (see \cite[Lemma 2.15 p. 54]{riffordbook}). By Sard's Theorem, the set $S$ of $\exp_x(p)$ with $p\in T_x^*M$ critical has Lebesgue measure zero in $M$. Therefore, the set $D\setminus S$ has full measure and for every $y\in D\setminus S$ there is a unique minimizing geodesic from $x$ to $y$ and it is not singular, which shows that $y$ does not belong to $\mathcal{S}^x_{\Delta,min^g}$. Let us now show that  (i) $\Rightarrow$ (iii). By definition of $\mathcal{S}^x_{\Delta,min^g}$, for every $y\notin \mathcal{S}^x_{\Delta,min^g}$ all minimizing horizontal paths between $x$ and $y$ are not singular. So repeating the proof of \cite[Theorem 3.14 p. 98]{riffordbook} (see also \cite{cr08}), we can show that the function $f:y\mapsto  d_{SR}(x,y)$ is locally semiconcave and so locally Lipschitz on the open set $U:=M\setminus \mathcal{S}^x_{\Delta,min^g}$.  Thus for every compact set $K \subset U$, there is a compact set $\mathcal{P}_K\subset T^*_xM$ such that for every $y\in K$,  there is $p\in \mathcal{P}_K$ with $\exp_x(p)=y$ and $H(x,p)=d_{SR}(x,y)^2/2$ (in other words $\gamma_{x,p}:[0,1]\rightarrow M$ is a  minimizing geodesic from $x$ to $y$). By Sard's Theorem, the set $S_K$ of $\exp_x(p)$ with $p\in \mathcal{P}_K$ critical is a closed set of Lebesgue measure zero. For every positive integer $k$, set (here the diameter of the convex set $d_y^+f$ is taken with respect to some geodesic distance on $T^*M$)
$$
\Sigma^{k} (f) := \Bigl\{y \in U \, \vert \, \mbox{diam} (d^+_yf) \geq 1/k\Bigr\}.
$$
By local semiconcavity of $f$ in $U$, each set $\Sigma^{k} (f)$ is a closed set in $U$ with Lebesgue measure zero (see \cite[Proposition 4.1.3 p. 79]{cs04}). We claim that 
$$
S_K' := K \cap \overline{ \bigcup_{k>0} \Sigma^k(f)} \subset  \left( K \cap \bigcup_{k>0} \Sigma^k(f) \right) \cup S_K.
$$
As a matter of fact, if $y\in K$ belongs to $ \overline{ \cup_{k>0} \Sigma^k(f)} \setminus \cup_{k>0} \Sigma^k(f)$, then $d_y^+f$ is a singleton and there is a sequence $\{y_l\}_l$ converging to $y$ such that all $d_{y_l}^+f$ have dimension at least one and tend to $d_y^+f$. This implies that the covector $p$ such that  $\exp_x(p)=y$ and $H(x,p)=d_{SR}(x,y)^2/2$ is critical, which shows that $y$ belongs to $S_K$. By construction, every point in $K\setminus S_K'$ is a smooth point. We conclude easily.

It remains to prove the second part. The smoothness of $f:y\mapsto  d_{SR}(x,y)$  is an easy consequence of the inverse function theorem. As a matter of fact, we can show easily that for every $y\in \mathcal{O}_x$ such that $y=\exp_x(p)$ with $H(x,p)=d_{SR}(x,y)^2/2$ and $p\in T_x^*M$ non-critical, there is a neighborhood $U$ of $y$ in $\mathcal{O}_x$ such that 
$$
f(z)^2= 2H(x,\exp_x(z)^{-1}) \qquad \forall z \in U,
$$
where $\exp_x^{-1}$ denotes a local inverse of the exponential mapping from a neighborhood of $p$ to $U$. To prove (\ref{starshaped}), we argue by contradiction.  If there are $x\in M$, $y\in \mathcal{O}_x$ and $s\in (0,1)$ such that $z:=\gamma(s,y)\in \mathcal{O}_x$ then either there are two distinct minimizing geodesics from $x$ to $z$ or there is only one minimizing geodesic from $x$ to $y$ which is singular. In the first case, we infer the existence of two distinct minimizing geodesics from $x$ to $y$, which contradicts the smoothness of $y$. In the second case, we deduce that the minimizing geodesic $\gamma_x(\cdot,y)$ is the projection of a normal extremal which is regular and whose restriction to $[0,s]$ is singular. This cannot happen under the assumption of analyticity of the datas because an abnormal extremal above  $\gamma_x(\cdot,y)$ over $[0,s]$ could be extended to an abnormal extremal over $[0,1]$ (see \cite[Proposition 1.11 p.21]{riffordbook}).
\end{proof}

\begin{remark}
By Proposition \ref{PROPSardminEQ}, any (complete) sub-Riemannian structure satisfying the minimizing Sard conjecture has negligeable cut loci. 
\end{remark}


\begin{remark}\label{REMample}
As pointed out by the referee, the set $\mathcal{O}_x$ could be replaced by the set $\mathcal{A}_x\subset \mathcal{O}_x$ of ample points from $x$. This set is defined as the set of $y \in \mathcal{O}_x$ where the unique minimizing geodesic from $x$ to $y$ is ample, that is whose growth vector saturates the tangent space. It can be shown to be open with full measure and geodesically star-shaped in the smooth case, we refer the interested reader to the monograph \cite{abr18}  for further details. In our case, since we need the analyticity to prove other results, we prefer to work with the simpler $\mathcal{O}_x$ which is geodesically star-shaped in the analytic case. 
\end{remark}

\subsection{Two characterizations for $\mbox{MCP}(0,N)$}\label{SECMCP}

The following result was implicit in the previous paper  \cite{rifford13} of the second author (it is also the case in \cite[Page 5]{lee15} and \cite[Section 6.2]{ohta14}). The measure contraction property $\mbox{MCP}(0,N)$  is equivalent to some upper bound on the divergence of the horizontal gradient of the squared pointed sub-Riemannian distance. This result holds at least whenever the horizontal gradient is well-defined and  the sets $\mathcal{O}_x$ are geodesically star-shaped.

\begin{proposition}\label{PROPversus}
Assume that $(\Delta,g)$ satisfies the minimizing Sard conjecture and that all its sets $\mathcal{O}_x$ are geodesically star-shaped, and let $\mu$ be a smooth measure on $M$ and $N>0$ be fixed. Then  $(\Delta,g)$ equipped with $\mu$ satisfies $\mbox{MCP}(0,N)$ if and only if 
\begin{eqnarray}\label{divfxN}
\mbox{div}_y^{\, \mu} \left( \nabla^h f^x \right)  \leq N \qquad \forall y \in \mathcal{O}_x, \, \forall x \in M,
\end{eqnarray}
where $f^x:M\rightarrow \R$ is the function defined by $f^x(y):=d_{SR}(x,y)^2/2$. 
\end{proposition}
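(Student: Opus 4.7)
The plan is to reformulate $\mbox{MCP}(0,N)$ as a pointwise Jacobian lower bound on the interpolation map $\Phi_s^x(y) := \gamma_x(s,y)$, and then to read this Jacobian bound off the standard ODE satisfied by the Jacobian of the flow generated by $-\nabla^h f^x$.

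First I would set up the change of variables. Under the minimizing Sard conjecture, $\mathcal{O}_x$ has full $\mu$-measure (Proposition \ref{PROPSardminEQ}), so any Borel set $A$ with $0<\mu(A)<\infty$ may be replaced by $A\cap \mathcal{O}_x$ without affecting the measure. On $\mathcal{O}_x$ the map $y\mapsto \Phi_s^x(y)$ is smooth and injective (unique minimizers), and by star-shapedness $\Phi_s^x(\mathcal{O}_x)\subset \mathcal{O}_x$ for every $s\in(0,1]$. Hence the change-of-variables formula gives
$$
\mu(A_s)=\int_A J_s^\mu(y)\,d\mu(y),\qquad J_s^\mu(y):=|{\det d_y\Phi_s^x}|_\mu.
$$
Thus $\mbox{MCP}(0,N)$ is equivalent to $J_s^\mu(y)\geq s^N$ for every $x\in M$, every $y\in \mathcal{O}_x$ and every $s\in(0,1]$ (continuity of $J_s^\mu$ in $y$ promotes the a.e.\ inequality to a pointwise one).

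Next I would identify $\Phi_s^x$ as a time-reparametrized flow of $\nabla^h f^x$. Because at a smooth point $z\in\mathcal{O}_x$ the vector $\nabla^h f^x(z)$ equals $\dot\eta(1)$ for the unique minimizing geodesic $\eta:[0,1]\to M$ from $x$ to $z$, and because the restriction $\tau\mapsto\gamma_x(s\tau,y)$ of the geodesic from $x$ to $y$ is the unique minimizer from $x$ to $z=\gamma_x(s,y)$, reparametrization yields
$$
\nabla^h f^x\bigl(\gamma_x(s,y)\bigr)=s\,\partial_s\gamma_x(s,y).
$$
Setting $z(t):=\gamma_x(e^{-t},y)$ one then checks that $\dot z(t)=-\nabla^h f^x(z(t))$, so $\Phi_s^x=\Psi_{-\ln s}$, where $\Psi_t$ denotes the flow of $-\nabla^h f^x$ on $\mathcal{O}_x$. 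The classical Liouville formula for this flow with respect to the smooth reference measure $\mu$ gives
$$
\frac{d}{ds}\log J_s^\mu(y)=-\frac{1}{s}\,\mbox{div}^{\,\mu}_{\Phi_s^x(y)}\bigl(\nabla^h f^x\bigr),\qquad J_1^\mu(y)=1,
$$
hence, integrating from $s$ to $1$,
$$
\log J_s^\mu(y)-N\log s=\int_s^1\frac{1}{\sigma}\Bigl[N-\mbox{div}^{\,\mu}_{\Phi_\sigma^x(y)}\bigl(\nabla^h f^x\bigr)\Bigr]\,d\sigma.
$$

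From this identity both implications are immediate. If \eqref{divfxN} holds, the integrand is non-negative, which yields $J_s^\mu(y)\geq s^N$ and therefore $\mbox{MCP}(0,N)$. Conversely, if $\mbox{MCP}(0,N)$ holds then the left-hand side is non-negative for every $y\in\mathcal{O}_x$ and every $s\in(0,1]$; differentiating at $s=1$ (where both sides vanish) gives $\mbox{div}^{\,\mu}_y(\nabla^h f^x)\leq N$, and $y$ is arbitrary in $\mathcal{O}_x$.

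The main obstacle I expect is the rigorous identification of $\Phi_s^x$ as the flow of $-\nabla^h f^x$, i.e., establishing the relation $\nabla^h f^x(\gamma_x(s,y))=s\,\partial_s\gamma_x(s,y)$ with enough regularity in $y$ to apply the Jacobian ODE. This uses that $f^x$ is smooth on $\mathcal{O}_x$ (second part of Proposition \ref{PROPSardminEQ}), that $\exp_x$ is a local diffeomorphism at the non-critical covector generating $\gamma_x(\cdot,y)$, and the Hamiltonian characterization of minimizing geodesics; once these smoothness facts are in place, the remainder is a routine computation with the Liouville formula for smooth flows.
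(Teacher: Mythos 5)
Your proof is correct and follows essentially the same route as the paper's: you identify the $s$-interpolation with the time-$(-\ln s)$ flow of $-\nabla^h f^x$ on $\mathcal{O}_x$ and apply the Liouville formula for the $\mu$-divergence, and the paper's converse (shrinking balls, then $t\to 0$) is just the integral form of your pointwise differentiation of the Jacobian at $s=1$. The only blemish is a sign slip in your displayed ODE --- the correct relation is $\frac{d}{ds}\log J_s^\mu(y)=+\frac{1}{s}\,\mbox{div}^{\,\mu}_{\Phi_s^x(y)}\left(\nabla^h f^x\right)$ --- but the integrated identity you actually use is the right one, so the argument stands.
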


\begin{proof}
Let $x\in M$ be fixed, the vector field $Z:=-\nabla^h f^x $ is well-defined and smooth on $\mathcal{O}_x$. Moreover by assumption, every solution of $\dot{y}(t)=Z(y(t))$ with $y(0)\in \mathcal{O}_x$ remains in $\mathcal{O}_x$ for all $t\geq 0$, we denote by $\{\varphi_t\}_{t\geq0}$ the flow of  $Z$ on $\mathcal{O}_x$. For every $y\in \mathcal{O}_x$, the function $\theta : t\in [0,+\infty) \mapsto d_{SR}(\varphi_t(y),y)$ satisfies  
$$
\theta(0)=0 \quad \mbox{and} \quad \theta(t) = \mbox{length}^g \left( \varphi_{[0,t]}(y)\right) = \int_0^t \left| Z(\varphi_s(y)  \right|\, ds.
$$
So that, for all $t\geq 0$,
$$
\dot{\theta}(t)= \left| Z(\varphi_t(y)  \right| = d_{SR} \left(x,\varphi_t(y)\right) = d_{SR}(x,y) - d_{SR}\left(y,\varphi_t(y)\right) = d_{SR}(x,y) -  \theta(t),
$$
which yields 
$$
\theta(t) = d_{SR}(x,y) \left( 1-e^{-t}\right) \qquad \forall t \geq 0.
$$
Consequently, if $A\subset \mathcal{O}_x$ is a Borel set and $s\in (0,1]$, then we have
$$
A_s = \Bigl\{ \gamma_x(s,y) \, \vert \, y \in A\Bigr\} = \varphi_t(A) \quad \mbox{with} \quad t=- \ln (s).
$$
Let us now assume that (\ref{divfxN}) is satisfied. By definition of $\mbox{div}^{\mu}Z$, for every $x\in M$ and any measurable set $A\subset \mathcal{O}_x$, we have for every $t\geq 0$ (see for example, see \cite[Proposition B.1]{br16}),
$$
\mu\left(\varphi_t(A) \right) = \int_A \exp \left( \int_0^t \mbox{div}_{\varphi_s(y)}^{\mu} (Z) \, ds\right) \, d\mu(y),
$$
which by (\ref{divfxN}) implies with $s=e^{-t}$,
$$
\mu \left( A_s\right) = \mu\left(\varphi_t(A) \right) \geq   \int_A \exp \left(-Nt\right) \, d\mu(y) =  s^N \mu(A).
$$
This shows that  (\ref{divfxN}) implies $\mbox{MCP}(0,N)$.  Conversely, if  $(\Delta,g)$ equipped with $\mu$ satisfies $\mbox{MCP}(0,N)$ then for every $x\in M$ and every small ball $B_{\delta}(y)\subset \mathcal{O}_x$ (say a Riemannian ball with respect to the Riemannian extension $g$), we have 
$$
\mu\left(\varphi_t \left(B_{\delta}(y)\right) \right)   =  \int_{B_{\delta}(y)} \exp \left( \int_0^t \mbox{div}_{\varphi_s(y)}^{\mu} (Z) \, ds\right) \, d\mu(y)\geq  e^{-Nt}\,  \mu \left(B_{\delta} (y) \right) \qquad \forall t \geq 0.
$$
For every $t\geq 0$, letting $\delta$ go to $0$ yields 
$$
\exp \left( \int_0^t \mbox{div}_{\varphi_s(y)}^{\mu} (Z) \, ds\right)\geq  e^{-Nt}.
$$
We infer (\ref{divfxN}) by dividing by $t$ and letting $t$ go to $0$.
\end{proof}

In the case of Carnot groups, the invariance of the divergence of $\nabla^h f^x$ by dilation allows us to characterize $\mbox{MCP}(0,N)$ in term of a control on the divergence over a compact set not containing the origin.

\begin{proposition}\label{PROPCarnoteq}
Let $\G$ be a Carnot group whose first layer is equipped with a left-invariant metric satisfying the minimizing Sard conjecture and $N>0$ fixed. Then the metric space $(\G,d_{SR})$ with Haar measure $\mu$ satisfies $\mbox{MCP}(0,N)$ if and only if  
\begin{eqnarray}\label{divfxNsph}
\mbox{div}_y^{\, \mu} \left( \nabla^h f^0 \right)  \leq N \qquad \forall y \in \mathcal{O}_0 \cap  S_{SR}(0,1),
\end{eqnarray}
where $f^0:M\rightarrow \R$ is the function defined by $f^0(y):=d_{SR}(0,y)^2/2$. 
\end{proposition}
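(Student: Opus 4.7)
The plan is to build on Proposition \ref{PROPversus} and exploit the two symmetries of a Carnot group --- left-invariance and the one-parameter family of dilations $\delta_\lambda$ --- to transfer the divergence bound from all of $\mathcal{O}_0$ down to the unit-sphere slice. Since $\G$ with a left-invariant sub-Riemannian structure is analytic, the second part of Proposition \ref{PROPSardminEQ} (together with the standing minimizing Sard hypothesis) ensures that every $\mathcal{O}_x$ is geodesically star-shaped, so Proposition \ref{PROPversus} applies and reduces $\mbox{MCP}(0,N)$ to the pointwise bound $\mbox{div}^{\,\mu}_y(\nabla^h f^x) \leq N$ for all $x\in \G$ and all $y \in \mathcal{O}_x$.

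Next I would remove the base point $x$ using left-invariance. The distance $d_{SR}$, the distribution $\Delta$, its metric $g$, and the Haar measure $\mu$ are all left-invariant, hence each left-translation $L_x$ is a measure-preserving isometry sending $\mathcal{O}_0$ onto $\mathcal{O}_x$, the function $f^0$ to $f^x$, the horizontal gradient $\nabla^h f^0$ to $\nabla^h f^x$, and preserving divergences computed with respect to $\mu$. Consequently the collection of conditions $\mbox{div}^{\,\mu}_y(\nabla^h f^x) \leq N$ over all $(x,y)$ reduces to the single family $\mbox{div}^{\,\mu}_y(\nabla^h f^0) \leq N$ for $y \in \mathcal{O}_0$.

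The core step is the dilation symmetry. Fix a left-invariant orthonormal frame $X_1,\dots,X_m$ of $\Delta$. Because horizontal vector fields have weight $1$, one has $d(\delta_\lambda)_y X_i(y) = \lambda\,X_i(\delta_\lambda(y))$, while the homogeneity of the distance gives $f^0\circ\delta_\lambda = \lambda^2 f^0$. Chain rule applied to $X_i(f^0\circ \delta_\lambda)$ yields $(X_i f^0)(\delta_\lambda(y)) = \lambda (X_i f^0)(y)$, and combining this with the scaling of the frame produces
$$
d(\delta_\lambda)_y\bigl(\nabla^h f^0(y)\bigr) = \nabla^h f^0\bigl(\delta_\lambda(y)\bigr),
$$
so $\nabla^h f^0$ is $\delta_\lambda$-related to itself. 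Since $\delta_\lambda^\ast \mu = \lambda^Q \mu$ for the homogeneous dimension $Q$, the intrinsic identity $\mathcal{L}_Z\mu = (\mbox{div}^{\,\mu}Z)\mu$ applied to both sides of $\delta_\lambda^\ast(\mathcal{L}_{\nabla^h f^0}\mu)$ immediately gives $(\mbox{div}^{\,\mu}\nabla^h f^0)\circ \delta_\lambda = \mbox{div}^{\,\mu}\nabla^h f^0$, i.e.\ this function is dilation-invariant.

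To finish, I would note that $\mathcal{O}_0$ is itself dilation-invariant (dilations preserve uniqueness of minimizing geodesics as well as non-singularity), and that every $y \in \mathcal{O}_0\setminus\{0\}$ may be written as $\delta_{d_{SR}(0,y)}(\tilde{y})$ with a unique $\tilde{y} \in \mathcal{O}_0\cap S_{SR}(0,1)$. The dilation-invariance of $\mbox{div}^{\,\mu}\nabla^h f^0$ then makes (\ref{divfxNsph}) equivalent to the corresponding bound on the whole of $\mathcal{O}_0$, which by the two previous reductions is equivalent to $\mbox{MCP}(0,N)$. The main technical obstacle is to keep the dilation bookkeeping straight: correctly accounting for the weight $1$ scaling of horizontal vectors, the weight $2$ scaling of $f^0$, and the weight $Q$ scaling of $\mu$, so that they combine cleanly into the invariance of $\mbox{div}^{\,\mu}\nabla^h f^0$.
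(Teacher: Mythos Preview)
Your proposal is correct and follows essentially the same route as the paper: reduce to the single base point $0$ and then use the dilation homogeneity to show that $\mbox{div}^{\,\mu}(\nabla^h f^0)$ is $\delta_\lambda$-invariant, so its bound on $\mathcal{O}_0\cap S_{SR}(0,1)$ propagates to all of $\mathcal{O}_0$. The only differences are cosmetic: you make the left-translation reduction from arbitrary $x$ to $0$ explicit (the paper leaves it implicit), and you derive the dilation-invariance of the divergence via the intrinsic identity $\mathcal{L}_Z\mu=(\mbox{div}^{\,\mu}Z)\mu$ together with $\delta_\lambda^\ast\mu=\lambda^Q\mu$, whereas the paper simply states the conclusion after the coordinate computation $\nabla^h_{\delta_\lambda(x)} f^0=\delta_\lambda(\nabla^h_x f^0)$.
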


\begin{proof}
Since Carnot groups are indeed analytic, by the second part of Proposition \ref{PROPSardminEQ} and Proposition \ref{PROPversus}, it is sufficient to show that (\ref{divfxNsph}) is equivalent to
\begin{eqnarray}\label{21dec1}
\mbox{div}_y^{\, \mu} \left( \nabla^h f^0 \right)  \leq N \qquad \forall y \in \mathcal{O}_0.
\end{eqnarray}
Recall that by taking a set of exponential coordinates $(x_1, \ldots,x_n)$, we can identify $\G$ with its Lie algebra $\mathfrak{g}\simeq \R^n$ and indeed consider that we work with the Lebesgue measure in $\R^n$ and that the sub-Riemannian structure is globally parametrized by an orthonormal family of analytic vector fields $X^1, \ldots, X^n$ in $\R^n$ satisfying
\begin{eqnarray}\label{homo}
X^i \left( \delta_{\lambda}(x)\right) = \lambda^{-1} \, \delta_{\lambda} \left( X^i(x)\right) \qquad \forall x \in \R^n, \, \forall i=1, \ldots,n,
\end{eqnarray}
where $\{\delta_{\lambda}\}_{\lambda > 0}$ is a family of dilations defined as ($d_1, \ldots, d_n$ are positive integers)
$$
\delta_{\lambda} \left(x_1,\ldots, x_n\right) = \left( \lambda^{d_1} x_1,  \lambda^{d_2} x_2, \ldots, \lambda^{d_n} x_n \right)  \qquad \forall x \in \R^n.
$$ 
By the homogeneity property, we have $d_{SR} \left(0,\delta_{\lambda}(x)\right) = \lambda \, d_{SR}(0,x)$ for all  $x \in \R^n$  and $\lambda >0$. Then we have 
\begin{eqnarray}\label{homo2}
f^0\left( \delta_{\lambda}(x)\right) = \lambda^2 \, f^0(x) \quad \mbox{and}  \quad d_{\delta_{\lambda}(x)}f^0\circ \delta_{\lambda}= \lambda^2 \, d_xf^0 \qquad \forall x \in \R^n, \, \forall \lambda >0.
\end{eqnarray}
Recall that the horizontal gradient $\nabla^h f^0$ is given by
$$
\nabla^h_x f^0 = \sum_{i=1}^m \left( X^i \cdot f^0\right)(x) X^i(x) \qquad \forall x \in \R^n.
$$
Therefore, by (\ref{homo})-(\ref{homo2}), we infer that for every $x\in \R^n$ and $\lambda >0$,
\begin{eqnarray*}
\nabla^h_{\delta_{\lambda}(x)} f^0  &= & \sum_{i=1}^m d_{\delta_{\lambda}(x)} f^0 \left( X^i \left( \delta_{\lambda}(x)\right) \right)  \, X^i \left( \delta_{\lambda}(x)\right) \\
& = & \lambda^{-2} \sum_{i=1}^m d_{\delta_{\lambda}(x)} f^0 \left(  \delta_{\lambda} \left( X^i (x)  \right) \right) \,  \delta_{\lambda} \left( X^i (x)\right)\\
& = & \sum_{i=1}^m d_x f^0  \left( X^i (x)  \right)  \,  \delta_{\lambda} \left( X^i (x)\right) =  \delta_{\lambda} \left( \nabla^h_x f^0\right).
\end{eqnarray*}
We deduce that 
$$
\mbox{div}_{\delta_{\lambda}(x)}^{\, \mu} \left( \nabla^h f^0 \right) = \mbox{div}_x^{\, \mu} \left( \nabla^h f^0 \right) \qquad \forall x \in \R^n, \, \forall \lambda >0,
$$
which shows that  (\ref{divfxNsph})  and (\ref{21dec1}) are equivalent and concludes the proof.
\end{proof}

\subsection{Nearly horizontally semiconcave functions}\label{SECnhscf}

Recall that without loss of generality, we can assume that the metric $g$ over $\Delta$ is the restriction of a global Riemannian metric on $M$. This metric allows us to define the $C^2$-norms of functions from $\R^m$ to $M$. In the following statement, $(e_1, \ldots, e_m)$ stands for the canonical basis in $\R^m$.

\begin{definition}
Let $C>0$ and $U$ an open subset of $M$, a function $f:U \rightarrow \R$ is said to be $C$-nearly horizontally semiconcave with respect to $(\Delta,g)$ if  for every $y\in U$, there are an open neighborhood $V^y$ of $0$ in $\R^m$, a function $\varphi^y: V^y \subset \R^m \rightarrow U$ of class $C^2$  and a function $\psi^y : V^y \subset \R^m \rightarrow \R$ of class $C^2$ such that 
\begin{eqnarray}\label{DEFnhscf1}
\varphi^y(0)=y, \quad \psi^y(0)= f(y), \quad f\left(\varphi^y(v)\right) \leq \psi^y(v) \, \, \, \forall v \in V^y, 
\end{eqnarray}
\begin{eqnarray}\label{DEFnhscf2}
\Bigl\{ d_0\varphi^y (e_1), \ldots, d_0\varphi^y(e_m)\Bigr\} \mbox{ is an orthonormal family of vectors in } \Delta(y),
\end{eqnarray}
and
\begin{eqnarray}\label{DEFnhscf3}
\left\|\varphi^y \right\|_{C^2}, \, \left\|\psi^y \right\|_{C^2} \leq C,
\end{eqnarray}
where $\left\|\varphi^y \right\|_{C^2}, \, \left\|\psi^y \right\|_{C^2}$ denote the $C^2$-norms of $\varphi^y$ and $\psi^y$.
\end{definition}

\begin{remark}
We refer the reader to \cite{mm16} for an other notion of horizontal semiconcavity of interest that has been investigated by Montanari and Morbidelli in the framework of Carnot groups.
\end{remark}
 
If $m$ were equal to $n$ that is if we were in the Riemannian case, the above definition would coincide with the classical definitions of semiconcave functions (see \cite{cs04,riffordbook}). Here, in the case $m<n$, the definition tells that at each point, there is a support function from above of class $C^2$ which bounds the function along a $C^2$ submanifold which is tangent to the distribution. This type of mild horizontal semiconcavity will allows us, at least in certain cases, to bound the divergence of the horizontal gradient of squared pointed sub-Riemannian distance functions. 

Before stating the main result of this section, we recall that a minimizing geodesic $\gamma : [0,1] \rightarrow M$ from $x$ to $y$ is called normal if it is the projection of a normal extremal, that is a trajectory $\psi :[0,1] \rightarrow T^*M$ of the Hamiltonian vector field associated with $(\Delta,g)$. We refer the reader to Appendix \ref{Notations} for more details. Here is our result ($|p|^*=|p|_x^*$ for every $(x,p)\in T^*M$ is the dual norm associated with $g$):

\begin{proposition}\label{PROPhsc}
Assume that $M$ and  $(\Delta,g)$ are analytic and let $K$ be a compact subset of $M$, $U\subset M$ a relatively compact open set of $M$, and $A>0$ satisfying the following property: For every $x\in K$, every $y \in \bar{U}$ and every minimizing geodesic $\gamma:[0,1] \rightarrow M$ from $x$ to $y$, there is $p\in T_x^*M$ with $|p|^*\leq A$ such that $\gamma$ is the projection of the normal extremal $\psi:[0,1] \rightarrow T^*M$ starting at $(x,p)$. Then there is $C>0$ such that for every $x\in K$, the function $y\mapsto d_{SR}(x,y)^2$ is $C$-nearly horizontally semiconcave in $U$.
\end{proposition}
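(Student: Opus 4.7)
The plan is to construct, for each $(x, y) \in K \times U$, the triple $(\varphi^y, \psi^y, V^y)$ witnessing $C$-nearly horizontal semiconcavity, with constants independent of $(x, y)$. I would start by covering $\bar U$ with finitely many open sets carrying an analytic orthonormal frame $X_1, \ldots, X_m$ of $\Delta$, fix such a frame near each $y \in U$, and define $\varphi^y(v)$ as the time-$1$ point of the flow of $\sum_{i=1}^{m} v_i X_i$ starting at $y$. Then $\varphi^y$ is analytic on some neighborhood $V^y$ of $0 \in \R^m$ with $\varphi^y(0) = y$ and $d_0 \varphi^y(e_i) = X_i(y)$ orthonormal in $\Delta(y)$; by finiteness of the cover and analyticity of the frames, $\|\varphi^y\|_{C^2}$ is bounded uniformly in $y$.

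To define $\psi^y$, I would select a minimizing geodesic $\gamma : [0, 1] \to M$ from $x$ to $y$: by hypothesis it is the projection of a normal extremal from some $(x, p)$ with $|p|^{*} \leq A$, so $\gamma$ is constant-speed with $|\dot\gamma|_g \equiv L := d_{SR}(x, y) \leq A$. Fix $t^{*} \in (0, 1)$, set $q := \gamma(t^{*})$, and denote by $u^{*} \in L^2([t^{*}, 1], \R^m)$ the control of $\gamma|_{[t^{*}, 1]}$ relative to an analytic orthonormal frame extended along $\gamma$. Assuming the differential $dE_q(u^{*})$ of the end-point map $E_q : L^2([t^{*}, 1], \R^m) \to M$ is surjective (the non-singularity of $\gamma|_{[t^{*}, 1]}$), the implicit function theorem yields an analytic family $v \mapsto u_v$ with $u_0 = u^{*}$ and $E_q(u_v) = \varphi^y(v)$. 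I would then set
\[
\psi^y(v) := L^2 t^{*} + \int_{t^{*}}^{1} |u_v(t)|^2 \, dt.
\]
Since $|u^{*}(t)|^2 \equiv L^2$, one has $\psi^y(0) = L^2 = f(y)$, and the concatenation of $\gamma|_{[0, t^{*}]}$ with the trajectory steered by $u_v$ from $q$ is a horizontal curve from $x$ to $\varphi^y(v)$ of energy $\psi^y(v)$, so the energy-length inequality gives $f(\varphi^y(v)) = d_{SR}(x,\varphi^y(v))^2 \leq \psi^y(v)$.

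The main obstacle is to justify the surjectivity of $dE_q(u^{*})$ and to extract uniform $C^2$-bounds over $(x, y) \in K \times U$. Analyticity makes non-singularity hereditary: any abnormal lift of $\gamma|_{[t^{*}, 1]}$ extends analytically to an abnormal lift of $\gamma$ on $[0, 1]$, in the same spirit as the last part of the proof of Proposition \ref{PROPSardminEQ}, so $\gamma|_{[t^{*}, 1]}$ is non-singular if and only if $\gamma$ is. The exceptional case of minimizers which are simultaneously normal and singular must be treated separately; here I expect one has to exploit analyticity once more, via the subanalytic structure of the family of minimizing geodesics parametrized by the compact set $\{(x, p) : x \in K, \; |p|^{*} \leq A\}$, using the cell decomposition results of Denef and van den Dries \cite{dvdd88} to produce a suitable selection of non-singular minimizers depending continuously on $(x, y)$. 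Continuous dependence on this compact parameter space, combined with the analyticity of the implicit function construction, would then deliver the uniform $C^2$-bound on $\psi^y$, and this selection step is the main technical obstacle of the proof.
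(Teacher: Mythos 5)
There is a genuine gap, and it sits exactly where you locate "the main technical obstacle": your construction of $\psi^y$ requires the surjectivity of $d_{u^*}E_q$, i.e.\ the non-singularity of (a tail of) the chosen minimizer, and the hypotheses of the proposition do not provide this. The assumption is only that every minimizer from $x\in K$ to $y\in\bar U$ is \emph{normal} with covector bounded by $A$; a normal minimizer can perfectly well be singular (non-strictly abnormal), and in that case \emph{every} minimizer from $x$ to $y$ may be singular. Your heredity remark (an abnormal lift of $\gamma|_{[t^*,1]}$ extends analytically to one of $\gamma$) only shows that cutting the geodesic does not help, and the proposed selection of non-singular minimizers via the cell decompositions of \cite{dvdd88} cannot succeed because the objects to be selected need not exist. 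So the implicit-function-theorem route, which is the classical mechanism for (full) semiconcavity of $d_{SR}^2$ in the absence of singular minimizers, is not available here.

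The paper's proof avoids surjectivity altogether, and this is the key idea you are missing: for \emph{any} horizontal path one always has $\Delta(\gamma(1))\subset \mathrm{Im}\bigl(d_{u_\gamma}E^{x,1}\bigr)$ (see \cite[Proposition 1.10]{riffordbook}), because a control variation concentrated near $t=1$ moves the endpoint in any direction of the distribution there. Since nearly horizontal semiconcavity only asks for a support pair along an $m$-dimensional surface tangent to $\Delta(y)$, it suffices to pick controls $v^1,\dots,v^m$ with $d_{u_\gamma}E^{x,1}(v^i)=X^i(y)$ and to set $\varphi^y(v)=E^{x,1}\bigl(u_\gamma+\sum_i v_i v^i\bigr)$, $\psi^y(v)=\bigl\|u_\gamma+\sum_i v_i v^i\bigr\|_{L^2}^2$; the inequality $d_{SR}(x,\varphi^y(v))^2\le\psi^y(v)$ is then immediate, with no non-degeneracy needed. (Note also that your $\varphi^y$, defined as a flow from $y$, is not obviously compatible with such an energy upper bound; the support surface must be swept out by endpoints of perturbed trajectories \emph{from} $x$.) The remaining work, which is where the paper actually uses analyticity and \cite{dvdd88}, is quantitative rather than selective: Lemma \ref{LEMaims} produces the $v^i$ with an $L^2$-bound uniform over the compact family of normal extremals with $|p|^*\le A$, by reducing to a moment problem for a control supported near $t=1$ and invoking the integral inequality of Lemma \ref{benin4j} to bound the relevant orthogonal projection from below. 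Your instinct that uniformity over the compact parameter set $\{(x,p):x\in K,\ |p|^*\le A\}$ is the crux is correct, but the tool needed is this uniform lower bound, not a continuous selection of non-singular minimizers.
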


\begin{proof}[Proof of Proposition \ref{PROPhsc}]
Let $K$ be a compact set of $M$, $U$ be a relatively compact open set of $M$ and $\bar{x} \in K$ fixed, let us first show how to construct functions $\varphi^{\bar{y}}, \psi^{\bar{y}}$ of class $C^2$ satisfying (\ref{DEFnhscf1})-(\ref{DEFnhscf2}) for  $\bar{y} \in \bar{U}$. Pick a minimizing geodesic $\bar{\gamma}:[0,1] \rightarrow M$ from $\bar{x}$ to $\bar{y}=\bar{\gamma}(1)$. There is an open neighborhood $U_{\bar{\gamma}}$ of $\bar{\gamma}([0,1])$ and a family $\mathcal{F}_{\bar{\gamma}}$ of $m$ analytic vector fields $X^1_{\bar{\gamma}}, \ldots, X_{\bar{\gamma}}^m$ in $M$ such that for every $z\in U_{\bar{\gamma}}$ the family $\{X_{\bar{\gamma}}^1(z), \ldots, X_{\bar{\gamma}}^m(z)\}$ is orthonormal with respect to $g$ and parametrize $\Delta$ (that is $\mbox{Span} \{X_{\bar{\gamma}}^1(z), \ldots, X_{\bar{\gamma}}^m(z)\}=\Delta(z)$) and for every $z \in M\setminus U_{\bar{\gamma}}$, $X_{\bar{\gamma}}^1(z), \ldots, X_{\bar{\gamma}}^m(z)$ belongs to $\Delta(z)$. Consider the End-Point mapping from $\bar{x}$ in time $1$ associated with the family $\mathcal{F}_{\bar{\gamma}}=\{X^1_{\bar{\gamma}}, \ldots, X_{\bar{\gamma}}^m\}$, it is defined by 
$$
E^{\bar{x},1}_{\mathcal{F}_{\bar{\gamma}}}: u\in L^{2}([0,1];\R^m) \, \longmapsto \, \gamma_u^{\mathcal{F}_{\bar{\gamma}}}(1)\in M,
$$
where $\gamma_u^{\mathcal{F}_{\bar{\gamma}}}(1) :[0,1] \rightarrow M$ is the solution to the Cauchy problem
\begin{eqnarray}\label{CPproof}
\dot{\gamma} (t) = \sum_{i=1}^m u_i(t) X_{\bar{\gamma}}^i(\gamma(t)) \, \mbox{ for a.e. } t \in [0,1], \quad \gamma(0)= \bar{x}.
\end{eqnarray}
Note that taking the vector fields $X_{\bar{\gamma}}^1, \ldots, X_{\bar{\gamma}}^m$ equal to zero outside of an neighborhood of $U_{\bar{\gamma}}$, we may assume without loss of generality that $E^{x,1}_{\mathcal{F}_{\bar{\gamma}}}$ is well-defined on $L^{2}([0,1];\R^m)$. Recall that the function $E^{x,1}_{\mathcal{F}_{\bar{\gamma}}}$ is smooth and satisfies (see \cite[Proposition 1.10 p. 19]{riffordbook})
\begin{eqnarray}\label{benin}
\Delta \left( \bar{y} \right) \subset \mbox{Im} \left(d_{u_{\bar{\gamma}}}E_{\mathcal{F}_{\bar{\gamma}}}^{\bar{x},1}\right),
\end{eqnarray}
where $u_{\bar{\gamma}}$ is the unique control $u\in L^2([0,1],\R^m)$ such that $\gamma_u^{\mathcal{F}_{\bar{\gamma}}}=\bar{\gamma}$. Therefore, there are  $v_{\bar{\gamma}}^1, \ldots, v_{\bar{\gamma}}^m \in L^{2}([0,1],\R^m)$ such that 
\begin{eqnarray}\label{3-12Proof1}
 d_{u_{\bar{\gamma}}}E^{\bar{x},1}_{\mathcal{F}_{\bar{\gamma}}}\left(v^i_{\bar{\gamma}}\right) = X_{\bar{\gamma}}^i \left( \bar{y} \right) \qquad \forall i=1, \ldots,m.
\end{eqnarray}
Define $\varphi_{\bar{\gamma}}:\R^m \rightarrow M$ by 
$$
\varphi_{\bar{\gamma}} (v) := E^{\bar{x},1}_{\mathcal{F}_{\bar{\gamma}}} \left( u_{\bar{\gamma}} + \sum_{i=1}^m v_i v_{\bar{\gamma}}^i\right) \qquad \forall v = (v_1, \ldots, v_m) \in \R^m.
$$
By construction, $\varphi_{\bar{\gamma}}$ is smooth and satisfies
$$
\varphi_{\bar{\gamma}}(0)=E^{\bar{x},1}_{\mathcal{F}_{\bar{\gamma}}} \left( u_{\bar{\gamma}} \right)= \bar{\gamma}(1)= \bar{y},
$$
and
$$
 d_0 \varphi_{\bar{\gamma}}(e_i) = d_{u_{\bar{\gamma}}}E^{ \bar{x},1}_{\mathcal{F}_{\bar{\gamma}}} \left(v^i_{\bar{\gamma}}\right)=X^i_{\bar{\gamma}} \left( \bar{y} \right) \qquad \forall i=1, \ldots,m.
$$
Moreover, for every $v \in \R^m$ such that the solution to (\ref{CPproof}) associated with the control $u_{\bar{\gamma}} + \sum_{i=1}^m v_i v_{\bar{\gamma}}^i$ remains in $U_{\bar{\gamma}}$, we have 
$$
d_{SR} \left( \bar{x},\varphi_{\bar{\gamma}}(v) \right)^2 \leq      \left\|  u_{\bar{\gamma}} + \sum_{i=1}^m v_i v_{\bar{\gamma}}^i\right\|_{L^2}^2 =:\psi_{\bar{\gamma}}(v).
$$
By construction, $\varphi_{\bar{\gamma}}$ and $\psi_{\bar{\gamma}}$ are smooth, defined in a neighborhood of $0\in \R^m$ and satisfy (\ref{DEFnhscf1})-(\ref{DEFnhscf2}). It remains to show that the $C^2$ norms of $\varphi^{\bar{\gamma}}$ and $\psi^{\bar{\gamma}}$ can be taken to be uniformly bounded, this is the purpose of the next lemma.

\begin{lemma}\label{LEMaims}
There are neighborhoods $U_{\bar{x}}$ and $U_{\bar{y}}$ respectively of $\bar{x}$ and $\bar{y}$ in $M$, a neighborhood $\mathcal{U}_{\bar{\gamma}}$  of $u_{\bar{\gamma}}$ in $L^2([0,1],\R^m)$  and $C_{\bar{\gamma}}>0$ such that for every $x \in K\cap U_{\bar{x}}$, $y\in \bar{U} \cap U_{\bar{y}}$ and every control $u_{x,y}$ associated with a minimizing geodesic $\gamma_{x,y}:[0,1] \rightarrow M$ from $x$ to $y$ with $u_{x,y} \in \mathcal{U}_{\bar{\gamma}}$, there are $v^1_{u_{x,y}}, \ldots, v^m_{u_{x,y}} \in L^{2}([0,1],\R^m)$ such that 
\begin{eqnarray}\label{aims1}
d_{u_{x,y}}E^{x,1}_{\mathcal{F}_{\bar{\gamma}}}\left(v^i_{u_{x,y}}\right) = X_{\bar{\gamma}}^i  (y) \qquad \forall i=1, \ldots,m
\end{eqnarray}
and
\begin{eqnarray}\label{aims2}
\left\| v_{u_{x,y}}^i\right\|_{L^2} \leq C_{\bar{\gamma}}  \qquad \forall i=1, \ldots,m.
\end{eqnarray}
\end{lemma}
\begin{proof}[Proof of Lemma \ref{LEMaims}]
Note that if we prove for each $i\in \{1, \ldots, m\}$  the existence of neighborhoods $U_x^i, U_y^i$ and $\mathcal{U}_{\bar{\gamma}}^i$ such that (\ref{aims1})-(\ref{aims2}) are satisfied for $i$, then the result follows by taking the intersections of the neighborhoods  $U_{\bar{x}}^i, U_{\bar{y}}^i$ and $\mathcal{U}_{\bar{\gamma}}^i$. So, let us fix $i$ in $\{1, \ldots,m\}$. By taking a chart on a neighborhood of $\bar{\gamma}([0,1])$ (that we still denote by $U_{\bar{\gamma}}$)  we may assume that the restriction of $E^{\bar{x},1}_{\mathcal{F}_{\bar{\gamma}}}$ to a neighborhood of $u_{\bar{\gamma}}$ is valued in $\R^n$ and doing a change of coordinates $(y_1, \ldots, y_n)$  in a neighborhood $U_{\bar{y}}^0$ we may also assume that there are analytic vector fields $Y^1,\ldots, Y^m$ such that 
$$
Y^1(y)= X^i (y) = \partial_{y_1}, \quad  Y^j (y)= \partial_{y_j} + \sum_{l=m+1}^n a^j_l (y) \, \partial_{y_l} \qquad \forall j=2, \ldots,m, \, \forall y \in U_{\bar{y}}^0
$$
and
$$
\Delta (z) = \mbox{Span} \Bigl\{ Y^1(z),\ldots, Y^m (z)\Bigr\} \qquad \forall z \in U_{\bar{\gamma}}.
$$
Observe that by construction, there are analytic mappings $f^j_k$ on $U_{\bar{\gamma}}$  for $j\in \{1, \ldots,m\}$ and $k\in \{1, \ldots, m\}$ such that 
$$
Y^j(z) = \sum_{k=1}^m f^j_k(z) \, X^k(z) \qquad \forall z \in U_{\bar{\gamma}}, \, \forall j =1, \ldots, m.
$$
As a consequence, if $\gamma :[0,1] \rightarrow U_{\bar{\gamma}}$ is solution to 
\begin{eqnarray}\label{aimsday2}
\dot{\gamma}(t) =  \sum_{j=1}^m w_j(t) \, Y^j(\gamma(t)) \, \mbox{ for a.e. } t \in [0,1],
\end{eqnarray}
for some $w\in L^2([0,1],\R^m)$, then there holds for a.e. $t\in [0,1]$,
\begin{eqnarray*}
\dot{\gamma}(t)  =     \sum_{j=1}^m w_j(t) \, \sum_{k=1}^m f_k^j(\gamma(t)) \, X^j(\gamma(t)) = \sum_{k=1}^m \left( \sum_{j=1}^m w_j(t) \, f_k^j(\gamma(t)) \right) \, X^k(\gamma(t)).
\end{eqnarray*}
This shows that if we denote by $\mathcal{F}$ the family $\{Y^1, \ldots, Y^m\}$, then any horizontal curve parametrized by $\mathcal{F}$ can be parametrized by $\mathcal{F}_{\bar{\gamma}}$ as above. On the other hand, any parametrization with respect to $\mathcal{F}_{\bar{\gamma}}$ leads to a parametrization with respect to $\mathcal{F}$.  Consequently, there is a control $\bar{w}\in L^2([0,1),\R^m)$ such that the solution of (\ref{aimsday2}) starting at $\bar{x}$ is equal to $\bar{\gamma}$ and there are neighborhoods $U_{\bar{x}}^0$ of $\bar{x}$ and  $\mathcal{W}$ of $\bar{w}$ together with a mapping  $G: U_{\bar{x}}^0 \times \mathcal{W} \rightarrow L^2([0,1],\R^m)$ of class $C^1$ such that 
$$
E^{x,1}_{\mathcal{F}}(w) = E^{x,1}_{\mathcal{F}_{\bar{\gamma}}} ( G(x,w)) \qquad \forall x\in U_{\bar{x}}^0, \, \forall w \in \mathcal{W},
$$
where $E^{x,1}_{\mathcal{F}}$ denotes the End-Point mapping from $x$ in time $1$ associated with the family $\mathcal{F}$. Therefore, in order to prove our result it is sufficient to prove that there is $\bar{C}>0$ such that for every $x$ in $K$ close to $\bar{x}$, every $y$ in $\bar{U}$ close to $\bar{y}$ and any $\gamma$ minimizing geodesic from $x$ to $y$ close to $\bar{\gamma}$ associated to the control $w\in \mathcal{W}$ with respect to $\mathcal{F}$, there is  $\omega\in L^2([0,1],\R^m)$ such that 
\begin{eqnarray}\label{aimsday2_1}
d_wE^{x,1}_{\mathcal{F}} (\omega) = X^i (y) = \partial_{y_1} \quad \mbox{and} \quad \left\| \omega\right\|_{L^2} \leq \bar{C}.
\end{eqnarray}

Let $x\in U_{\bar{x}}^0$ and $w\in \mathcal{W}$ be fixed, the differential of $E^{x,1}_{\mathcal{F}}$ at $w$ is given by (see \cite[Remark 1.5 p.15]{riffordbook})
$$
d_{w}E^{x,1}_{\mathcal{F}}(v) = S(1) \int_0^1 S(t)^{-1} B(t) v(t) \, dt \qquad \forall v \in L^2([0,1],\R^m),
$$
where $S:[0,1] \rightarrow M_n(\R)$ is solution to the Cauchy problem
$$
\dot{S}(t) =A(t) S(t) \quad \mbox{for a.e. } t \in [0,1], \quad S(0)=I_n
$$
and where the matrices $A(t)\in M_n(\R), B(t) \in M_{n,m}(\R)$ are defined by (note that $J_{Y^{1}}=J_{X^i}=0$)
$$
A(t) := \sum_{j=2}^m w_j(t) \, J_{Y^j} \left( \gamma(t) \right) \qquad \mbox{a.e. } t \in [0,1],
$$
$$
B(t) = \left( Y^1 \left( \gamma(t) \right),  \ldots, Y^m \left( \gamma (t) \right) \right) \qquad \forall t \in [0,1].
$$
By construction of $Y^1, \ldots, Y^m$, there is $\tau^0 \in (0,1)$ (which does not depend upon $x, y$ but only upon $\bar{x}, \bar{y}$) such that for almost every $t\in [1-\tau^0,1]$ the matrix $A(t)$ has the form
$$
A(t) = 
\begin{pmatrix} 
0 & \vline & \begin{matrix} 0 & \cdots &  \cdots & 0 \end{matrix} &\vline &  \begin{matrix} 0 & \cdots & 0 \end{matrix} \\
\hline 
\begin{matrix} 
0 \\
\vdots \\
\vdots \\
0
\end{matrix}
& \vline & 
\bigzero_{m-1,m-1}
& \vline & \bigzero_{m-1,n-m} \\
\hline
\alpha(t)
& \vline & \beta(t) & \vline & \delta(t)
\end{pmatrix},
$$
where $\alpha(t)$ is in $\R^{n-m}$, $\beta(t)=(\beta(t)_{k,l})$ is a $(n-m)\times (m-1)$ matrix and $\delta(t)$ is a $(n-m)$ square matrix.  
Therefore, as the solution of the Cauchy problem $\dot{\tilde{S}}(t)=\tilde{S}(t) A(1-t), \tilde{S}(0)=I_n$, the first column of the matrix $\tilde{S}(t):=S(1)S(1-t)^{-1}$ with $t\in [0,1]$ has the form
\begin{eqnarray}\label{aimsday2_3}
\begin{pmatrix}
\tilde{s}_1(t) \\
\vdots \\
\tilde{s}_n(t)
\end{pmatrix}
\quad \mbox{with} \quad
\left\{
\begin{array}{l}
\tilde{s}_1(t) = 1 \\
\tilde{s}_j(t) = 0 \quad \forall j=2, \ldots, m
\end{array}
\right.
\quad \forall t \in [0,\tau_0]
\end{eqnarray}
and the column vector $\tilde{s}(t)$ with coordinates $(\tilde{s}_{m+1}(t), \ldots, \tilde{s}_n(t))$ is  solution of 
\begin{eqnarray}\label{aimsday2_2}
\dot{\tilde{s}}(t) = D(t) \, \alpha(1-t) \quad \forall t \in [0,\tau_0], \quad \tilde{s}(0)=0
\end{eqnarray}
with $D(t)$ the $(n-m)$ square matrices satisfying  
\begin{eqnarray}\label{aimsday2_22}
 \dot{D}(t) = D(t) \delta(1-t) \quad \forall t \in [0,\tau_0], \quad  D(0)=I_{n-m}.
\end{eqnarray}
Thus, a way to solve $d_w E^{x,1}_{\mathcal{F}}(\omega)=\partial_{y_1}$ is to take $\omega \in L^2([0,1], \R^m)$ of the form
\begin{eqnarray}\label{johan1}
\omega(t) = \left(\omega_1(1-\cdot),0, \ldots, 0\right)\qquad \mbox{a.e. } t \in [0,1]
\end{eqnarray}
 with 
\begin{multline}\label{johan2}
\mbox{Supp}(\omega_1) \subset [0,\tau^0], \quad \int_0^1 \omega_1(t) \, dt = 1 \\
\quad \mbox{and} \quad \int_0^1 \omega_1(t) \, \tilde{s}_l(t)\, dt = 0 \quad \forall l=m+1, \ldots, n.
\end{multline}

Remember now that by assumption, $M$ and $(\Delta,g)$ are analytic and for every $x\in K$ and $ y \in \bar{U}$ all minimizing geodesics joining $x$ to $y$  are normal with initial covector bounded by $A$. Let $\bar{p}\in T_{\bar{x}}^*M$ be such that $\bar{\gamma}=\gamma_{\bar{x},\bar{p}}$ where for any $(x,p)\in T^*M$, $\gamma_{x,p}:[0,1] \rightarrow M$ denotes the projection of the normal extremal $\psi_{x,p}:[0,1] \rightarrow T^*M$ starting at $(x,p)$ (see Appendix \ref{Notations}). Then, the desired result will follow if we show that there are a neighborhood $\mathcal{T}$ of $(\bar{x},\bar{p})$ in $T^*M$ and $\bar{C}>0$ such that for every $(x,p)\in \mathcal{T}$, the point $x$ belongs to $U_{\bar{x}}^0$, the curve $\gamma_{x,p}([0,1])$ is contained in $U_{\bar{\gamma}}$, the associated control $w=w^{x,p}$ belongs to $\mathcal{W}$ and there is $\omega= \omega^{x,p} \in L^2([0,1],\R^m)$ satisfying (\ref{johan1})-(\ref{johan2}) (with the function $\tilde{s}=\tilde{s}^{x,p}=(\tilde{s}_{m+1}(t), \ldots, \tilde{s}_n(t)) :[0,1]\rightarrow \R^{n-m}$ given by the first column of $\tilde{S}(t)=\tilde{S}^{x,p}(t):=S(1)S(1-t)^{-1}$ associated  with $\gamma_{x,p}$ and $w_{x,p}$ which satisfies (\ref{aimsday2_3})-(\ref{aimsday2_2}) with $\alpha=\alpha^{x,p}$ and $\delta=\delta^{x,p}$ associated with $\gamma_{x,p}$ and $w_{x,p}$ as well) such that   
\begin{eqnarray}\label{johan3}
\left\| \omega^{x,p} \right\|_{L^2} \leq \bar{C}.
\end{eqnarray}
We need the following lemma whose proof is postponed to Appendix \ref{PROOFbenin}.

\begin{lemma}\label{benin4j}
Let $\mathcal{K} \subset \R^l$ be a compact set and $h:[0,1] \times \mathcal{K} \rightarrow \R$ be an analytic mapping such that $h(0,\kappa)=0$ for all $\kappa\in \mathcal{K}$. Then there are $\tau>0$ as small as desired and $\nu \in (0,1)$ such that 
\begin{eqnarray}\label{eq15oct}
\left( \int_0^{\tau} h(t,p) \, dt \right)^2 \leq \nu \, \tau \, \int_0^{\tau} h(t,p)^2 \, dt \qquad \forall \kappa \in \mathcal{K}.
\end{eqnarray}
\end{lemma}

Let $\mathcal{T}$ be a compact neighborhood of  of $(\bar{x},\bar{p})$ in $T^*M$ such that for every $(x,p)\in \mathcal{T}$, the point $x$ belongs to $U_{\bar{x}}^0$, the curve $\gamma_{x,p}([0,1])$ is contained in $U_{\bar{\gamma}}$ and the associated control $w=w^{x,p}$ belongs to $\mathcal{W}$. Then we note that the mapping 
$$
(t,x,p,\lambda) \in [0,1] \times \mathcal{T} \times [-1,1]^{n-m}  \longmapsto h(t,x,p,\lambda) := \sum_{k=1}^{n-m} \lambda_k \tilde{s}_k^{x,p}(t)
$$
 is analytic. Therefore, by Lemma \ref{benin4j}, there are $\tau\in(0,\tau^0)$ and $\nu\in (0,1)$ such that
\begin{eqnarray}\label{johan6}
\left( \int_0^{\tau}  h(t,x,p,\lambda) \, dt \right)^2 \leq \nu \tau \, \int_0^{\tau} h(t,x,p,\lambda)^2 \, dt \qquad \forall (x,p) \in \mathcal{T}, \, \forall \lambda \in  [-1,1]^{n-m}.
\end{eqnarray}
Define $I\in L^2([0,1],\R)$ by $I(t):= 1_{[0,\tau]}(t)$ for all $t\in [0,1]$ and for every $(x,p) \in \mathcal{T}$ denote by $P\in L^2([0,1],\R)$ the orthogonal projection of $I$ in $L^2([0,1],\R)$ over the vector space
$$
V^{x,p}:= \Bigl\{ f \in L^2([0,1],\R) \, \vert \, \langle f, 1_{[0,\tau]} \, \tilde{s}_l^{x,p} \rangle_{L^2}=0, \, \forall l=m+1, \ldots,n \Bigr\}.
$$
Let $(x,p)\in \mathcal{T}$ be fixed, then there are $\Lambda_1^{x,p}, \ldots, \Lambda_{n-m}^{x,p} \in \R$ such that 
$$
P= I + \sum_{k=1}^{n-m} \Lambda_k \, 1_{[0,\tau]} \,  \tilde{s}^{x,p}_{m+k}  =: I+ J
$$
and there holds 
\begin{multline*}
\langle P, 1_{[0,\tau]} \, \tilde{s}^{x,p}_l \rangle_{L^2} = \int_0^1  P(t) 1_{[0,\tau]} (t) \, \tilde{s}_l^{x,p}(t) \, dt =\\
0 =  \int_0^{\tau}  \left[ 1_{[0,\tau]} (t) P(t) \right] \, \tilde{s}_l^{x,p}(t) \, dt  \qquad \forall l=m+1, \ldots, n-m
\end{multline*}
and
\begin{eqnarray*}
 \int_0^1    1_{[0,\tau]}  (t) P(t) (t) \, dt =\langle P,I\rangle_{L^2} =  \|P\|^2_{L^2} & = & \|I\|_{L^2}^2  + \|J\|_{L^2}^2 + 2 \langle I,J\rangle_{L^2} \\
 & \geq &  \|I\|_{L^2}^2  + \|J\|_{L^2}^2 - 2 \sqrt{\nu}  \|I\|_{L^2}  \, \|J\|_{L^2}\\
 & \geq & \left(1- \sqrt{\nu}\right)  \|I\|_{L^2}^2 = \nu \left(1-\sqrt{\nu}\right),
\end{eqnarray*}
where we used that setting $\Lambda_{\bar{k}} = \max \{\Lambda_1, \ldots, \Lambda_{n-m}\} $ (note that $\Lambda_{\bar{k}} \neq0$) thanks to (\ref{johan6}) yields
\begin{eqnarray*}
\left( \langle I,J\rangle_{L^2} \right)^2 = \left(\int_0^{\tau} \sum_{k=1}^{n-m} \Lambda_k \tilde{s}_k^{x,p}(t) \, dt \right)^2& = &  \Lambda_{\bar{k}}^2  \left(\int_0^{\tau} \sum_{k=1}^{n-m} \frac{\Lambda_k}{\Lambda_{\bar{k}}} \, \tilde{s}_k^{x,p}(t) \, dt \right)^2 \\
& \leq &  \Lambda_{\bar{k}}^2 \, \nu \tau \, \int_0^{\tau} \left( \sum_{k=1}^{n-m} \frac{\Lambda_k}{\Lambda_{\bar{k}}} \, \tilde{s}_k^{x,p}(t)     \right)^2 \, dt \\
& = & \nu \, \|I\|_{L^2}^2  \, \|J\|_{L^2}^2.
\end{eqnarray*}
In conclusion, we deduce that  for every $(x,p)\in \mathcal{T}$, the function  $\omega=\omega^{x,p} \in L^2([0,1], \R^m)$ of the form (\ref{johan1}) with $\omega_1=\omega_1^{x,p}$ given by 
$$
\omega_1^{x,p}(t) := \frac{1_{[0,\tau]}  (t) P(t)}{ \|P\|^2_{L^2} } \qquad \forall t \in [0,1]
$$
satisfies (\ref{johan2}) and
$$
\left\| \omega_1^{x,p} \right\|_{L^2} \leq \frac{1}{\sqrt{ \nu \left(1-\sqrt{\nu}\right)}}.
$$
The proof of Lemma \ref{LEMaims} is complete.
\end{proof}

We conclude easily by compactness of $K$ and $\bar{U}$.
\end{proof}

\section{Proof of Theorem \ref{THMgen}}\label{SECproofTHMgen}

Let $M$ be an analytic compact manifold, $(\Delta,g)$ a two-step analytic sub-Riemannian structure and  $\mu$ a smooth measure on $M$. The following result, due to Agrachev and Lee \cite{al09} (see also \cite{riffordbook}), is a consequence of the fact that $\Delta$ is two-step (and the compactness of $M$). We refer the reader to \cite{al09,riffordbook} for the proof. In fact, it is worth mentioning that Agrachev and Lee prove that a sub-Riemannian structure is two-step if and only if $d_{SR}$ is locally Lipschitz in charts.

\begin{lemma}\label{LEMgen2}
The function $d_{SR}^2 : M\times M \rightarrow \R$ is locally Lipschitz in charts. In particular, there is $L>0$ such that $\left| \nabla_yf^x\right| \leq L$ for all $x\in M$ and $y \in \mathcal{O}_x$. Furthermore,  there is $A>0$ such that for every $x, y \in M$ and every minimizing geodesic $\gamma:[0,1] \rightarrow M$ from $x$ to $y$, there is $p\in T_x^*M$ with $|p|^*\leq A$ such that $\gamma$ is the projection of the normal extremal $\psi:[0,1] \rightarrow T^*M$ starting at $(x,p)$.
\end{lemma}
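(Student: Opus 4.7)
The statement bundles three claims: (i) the bivariate function $d_{SR}^2$ is locally Lipschitz in charts; (ii) a uniform gradient bound $|\nabla_y f^x|\leq L$ holds on each $\mathcal{O}_x$; and (iii) for every minimizer $\gamma$ there is a normal lift with initial covector of $|\cdot|^*$-norm at most some universal $A$. The overall plan is to take (i) as input (the Agrachev--Lee theorem), derive (ii) from (i) by compactness of $M\times M$, and then deduce (iii) from (ii) via the Hamiltonian flow.

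For (i), the natural starting point is the ball-box theorem: the two-step condition $\Delta+[\Delta,\Delta]=TM$ provides privileged coordinates $(y_h,y_v)\in\R^m\times\R^{n-m}$ with weights $(1,2)$ and the local estimate $d_{SR}(y,y')^2\leq C(\|y_h-y_h'\|^2+\|y_v-y_v'\|)\leq C'\|y-y'\|$ near the diagonal. The subtle point is that the naive bound
\[
|d_{SR}^2(x,y)-d_{SR}^2(x,y')|\leq \bigl(d_{SR}(x,y)+d_{SR}(x,y')\bigr)\,d_{SR}(y,y')
\]
only yields H\"older-$1/2$ dependence as soon as $d_{SR}(x,y)$ is bounded away from $0$, so the bivariate Lipschitz property cannot be extracted from the triangle inequality alone and requires a structural use of the step-$2$ bracket condition (via horizontal lifts of vertical perturbations, or the nilpotent approximation). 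This is where I would invoke Agrachev--Lee. Claim (ii) then follows from (i) combined with the compactness of $M\times M$ and the smoothness of $f^x$ on $\mathcal{O}_x$ furnished by Proposition \ref{PROPSardminEQ}.

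For (iii), I would first handle the smooth-point case $y\in\mathcal{O}_x$. There the unique minimizer is the projection of a normal extremal $\psi(t)=(\gamma(t),p(t))$ with $\psi(0)=(x,p_0)$, and the identity $p(1)=d_y f^x$ together with (ii) gives $|p(1)|^*\leq L$. Because $M$ is compact and $H$ is preserved along trajectories, the Hamiltonian vector field on $T^*M$ is complete; the time-$(-1)$ flow is therefore continuous on the compact disk bundle $\{|q|^*\leq L\}\subset T^*M$ and sends it into a compact set, so $|p_0|^*$ is uniformly bounded by some $A>0$. To extend the bound to every minimizer between arbitrary $x,y\in M$, I would appeal to the companion fact (also part of the Agrachev--Lee analysis of two-step structures) that every length minimizer admits a normal lift; such a $\gamma$ is then obtained as a limit of normal minimizers with bounded initial covectors, and the bound on $|p_0|^*$ passes to the limit by compactness of the $A$-disk bundle.

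The main obstacle is clearly (i): the triangle-inequality route genuinely fails off the diagonal, and one must exploit the step-$2$ bracket condition in a structural way---which is exactly what Agrachev and Lee establish and what I would cite as a black box.
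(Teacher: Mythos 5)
The paper does not actually prove this lemma: it is stated as a direct citation of Agrachev--Lee \cite{al09} and \cite{riffordbook}, so your decision to treat the local Lipschitzness of $d_{SR}^2$ as a black box is exactly what the authors do, and your derivation of the gradient bound $|\nabla_y f^x|\leq L$ from compactness of $M\times M$ together with the smoothness of $f^x$ on $\mathcal{O}_x$ is correct. Your treatment of the covector bound at smooth points is also fine: for $y\in\mathcal{O}_x$ the final covector of the unique normal lift is $d_yf^x$, hence bounded by $L$, and the time-$(-1)$ Hamiltonian flow maps the compact disk bundle $\{|q|^*\leq L\}$ into a compact set, which yields a uniform $A$.

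The gap is in your extension of the covector bound to \emph{every} minimizing geodesic between \emph{arbitrary} $x,y$. Your limit argument --- approximate $y$ by points where the minimizer is unique and normal with covector $\leq A$, pass to the limit in the compact $A$-disk bundle --- only produces \emph{some} normal minimizer from $x$ to $y$ with $|p_0|^*\leq A$. Outside $\mathcal{O}_x$ minimizers need not be unique, so the limit need not be the particular $\gamma$ you started with, and the statement quantifies over all minimizers. Note also that boundedness of the energy does not bound $p_0$ by itself, since $H(x,\cdot)$ is degenerate in the directions annihilating $\Delta(x)$; this is precisely where covectors blow up near abnormal curves. The argument that actually works (and is what Agrachev--Lee establish, cf.\ also Agrachev's compactness theorem for length-minimizers \cite{agrachev01}) is a quantitative Lagrange-multiplier estimate: in step two no nontrivial minimizer is strictly abnormal (the Goh condition forces triviality), so every minimizer $\gamma$ with control $u$ has a normal lift whose final covector $p_1$ satisfies $p_1\circ d_uE^{x,1}=2\langle u,\cdot\rangle_{L^2}$; the two-step condition provides a uniform right inverse of $d_uE^{x,1}$ along all minimizers, whence $|p_1|^*\leq 2C\,\|u\|_{L^2}\leq 2C\,\mathrm{diam}_{SR}(M)$, and one then flows back as you do. If you intend to cite this uniform estimate as part of the Agrachev--Lee black box, say so explicitly and drop the limit argument, which does not prove the claim as stated.
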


By the above lemma and Proposition \ref{PROPhsc}, there is $C>0$ such that for every $x\in M$ the function $f^x: y\mapsto d_{SR}(x,y)^2/2$ is $C$-nearly horizontally semiconcave in $M$. 
\begin{lemma}\label{LEMgen1}
There is $B>0$ such that for every $x\in M$ the following property holds: for every $y\in \mathcal{O}_x$, there is a neighborhood $U^y \subset \mathcal{O}_x$ of $y$ along with an orthonormal family of smooth vector fields $X^1,\ldots, X^m$ which parametrize $\Delta$ in $U^y$ such that
\begin{eqnarray}\label{22dec6}
\left\| X^i\right\|_{C^1} \leq B  \qquad \forall i=1, \ldots,m,
\end{eqnarray}
and
\begin{eqnarray}\label{22dec5}
\left[X^i \cdot (X^i \cdot f^x)\right](z)  \leq B \left|\nabla_z f^x\right| +B \qquad \forall z \in U^y, \, \forall i=1, \ldots,m,
\end{eqnarray}
where $\nabla_z f^x$ stands for the gradient of $f^x$ at $z$ with respect to the global Riemannian metric $g$.
\end{lemma}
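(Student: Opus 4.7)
The plan is to combine the $C^1$-boundedness of suitable local orthonormal frames of $\Delta$—coming from compactness of $M$ and analyticity of $(\Delta,g)$—with the nearly horizontally semiconcave character of $f^x$ provided by Proposition~\ref{PROPhsc} (applied with $K=U=M$ using the uniform covector bound $A$ from Lemma~\ref{LEMgen2}). The idea is that, at each $z\in U^y$, a $C^2$ upper support function for $f^x$ whose tangent plane realizes the frame $X^i(z)$ controls the second derivative of $f^x$ in the direction $X^i(z)$, and then $(X^i\cdot X^i\cdot f^x)(z)$ differs from this second derivative only by a first-order correction in $\nabla_z f^x$.

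For fixed $x\in M$ and $y\in\mathcal{O}_x$, I would take the unique minimizing geodesic $\bar\gamma$ from $x$ to $y$, choose an analytic orthonormal frame $\mathcal{F}_{\bar\gamma}=\{X^1,\dots,X^m\}$ of $\Delta$ in a neighborhood $U_{\bar\gamma}$ of $\bar\gamma([0,1])$ (possible by analyticity of $\Delta$), and define $U^y\subset U_{\bar\gamma}\cap\mathcal{O}_x$ as a small relatively compact neighborhood of $y$ shrunk so that for each $z\in U^y$ the minimizing geodesic from $x$ to $z$ lies in the control neighborhood $\mathcal{U}_{\bar\gamma}$ of Lemma~\ref{LEMaims}. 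The construction underlying Proposition~\ref{PROPhsc} then supplies, for every such $z$, $C^2$ maps $\varphi^z:V^z\to U_{\bar\gamma}$ and $\psi^z:V^z\to\R$ with uniformly bounded $C^2$-norms satisfying $\varphi^z(0)=z$, $\psi^z(0)=f^x(z)$, $f^x\circ\varphi^z\leq\psi^z$, and $d_0\varphi^z(e_i)=X^i(z)$. The bound (\ref{22dec6}) comes directly from analyticity and compactness of $\bar U^y$.

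To deduce (\ref{22dec5}), I would compare the integral curve $\gamma_z$ of $X^i$ through $z$ with the curve $\beta_i(t):=\varphi^z(te_i)$; both start at $z$ with the same velocity $X^i(z)$, hence in a chart
$$(X^i\cdot X^i\cdot f^x)(z) \;=\; \frac{d^2}{dt^2}\bigg|_{t=0}f^x(\beta_i(t)) \;+\; df^x_z\bigl(\ddot\gamma_z(0)-\ddot\beta_i(0)\bigr).$$
Since $\psi^z-f^x\circ\varphi^z\geq 0$ vanishes at the origin, the first summand is at most $(\psi^z)''(0)\leq\|\psi^z\|_{C^2}$, while the correction is bounded by $|\nabla_z f^x|\bigl(\|X^i\|_{C^1}^2+\|\varphi^z\|_{C^2}\bigr)$. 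Taking $B$ to dominate all the resulting constants gives (\ref{22dec5}).

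The main obstacle is the uniformity of $B$ as $(x,y)$ ranges over all pairs with $y\in\mathcal{O}_x$: this requires the $C^2$-bounds on $\varphi^z,\psi^z$ from Lemma~\ref{LEMaims} to be insensitive to the choice of $\bar\gamma$ and $\mathcal{F}_{\bar\gamma}$, together with a global bound on $\|X^i\|_{C^1}$ extracted from a finite subcover of $M$ by charts supporting analytic orthonormal frames of $\Delta$. A careful re-examination of the proof of Proposition~\ref{PROPhsc} is needed to verify that its universal constant $C$ indeed controls the construction for any admissible local analytic frame, so that we are free to pick $\mathcal{F}_{\bar\gamma}$ extending our chosen $X^i$ near $y$.
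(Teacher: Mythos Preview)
Your argument is correct and rests on the same computation as the paper's: both reduce (\ref{22dec5}) to the nonpositivity of the Hessian at $0$ of $f^x\circ\varphi-\psi$, plus a first-order correction bounded by $|\nabla f^x|$ times the $C^2$-norms of the data. The one organisational difference is that the paper reverses your order of choices. Rather than fixing the frame first and then arranging the semiconcavity construction to realise $d_0\varphi^z(e_i)=X^i(z)$ for every $z\in U^y$, the paper invokes Proposition~\ref{PROPhsc} as a black box at the single point $y$, takes whatever orthonormal family $\{d_0\varphi^y(e_i)\}\subset\Delta(y)$ it delivers, and only then extends this family to a local orthonormal frame $X^i$ with $\|X^i\|_{C^1}\le A$ (possible by compactness of $M$). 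This yields (\ref{22dec5}) only at $y$ itself, with constant $A^2+C$; the extension to a small $U^y$ is then just continuity, using smoothness of $f^x$ on $\mathcal{O}_x$ and choosing $B$ strictly larger. The payoff of the paper's route is exactly that it sidesteps the obstacle you identify at the end: since the frame is adapted \emph{a posteriori} to the semiconcavity data, there is no need to re-open the proof of Proposition~\ref{PROPhsc} to check frame-compatibility, and the universal constants $A,C$ give a uniform $B$ immediately. Your route is equally valid and has the minor advantage of producing the bound on all of $U^y$ in one stroke, but it does require the verification you flag (which, for the record, is indeed contained in (\ref{aims1}) of Lemma~\ref{LEMaims}).
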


\begin{proof}[Proof of Lemma \ref{LEMgen1}]
First of all, we notice that there is $A>0$ such that if $v^1, \ldots, v^m$ is an orthonormal family of tangent vectors in $\Delta(z)$ for some $z\in M$ then there is an orthonormal family of smooth vector fields $X^1, \ldots, X^m$ which generates the distribution $\Delta$ in a neighborhood of $z$ and such that $\|X^i\|_{C^1}$ is bounded by $A$ for all $i=1, \ldots, m$. Let $x\in M$ be fixed, then by $C$-nearly horizontal semiconcavity of $f^x$, for every $y\in M$, there are an open neighborhood $V^y$ of $0$ in $\R^m$, a function $\varphi^y: V^y \subset \R^m \rightarrow U$ of class $C^2$  and a function $\psi^y : V^y \subset \R^m \rightarrow \R$ of class $C^2$ such that (\ref{DEFnhscf1}) (with $f=f^x$), (\ref{DEFnhscf2}) and (\ref{DEFnhscf3}) are satisfied. Fix $y\in \mathcal{O}_x$ and define the function $F^y:U^y \rightarrow \R$ by $F^y:=f^x\circ \varphi^y - \psi^y$, it is of class $C^2$ and satisfies
$$
d_0F^y=0 \quad \mbox{and} \quad \mbox{Hess}_0 F^y \leq 0.
$$
Taking a chart near $y$ we can assume that we work in $\R^n$. Let $\varphi^y=(\varphi_1^y, \ldots, \varphi_n^y)$ and $(x_1, \ldots, x_n)$ and $(v_1, \ldots, v_m)$ the coordinates respectively in $\R^n$ and $\R^m$. Then, we have 
$$
\frac{ \partial F^y}{\partial v_i}(0) = \left( \sum_{k=1}^n \frac{\partial f^x}{\partial x_k}(y) \frac{\partial \varphi_k^y}{\partial v_i}(0) \right) - \frac{\partial \psi^y}{\partial v_i} (0)=0 \qquad \forall i=1, \ldots, m
$$
and for every $i=1, \ldots, m$,
\begin{multline*}
\frac{\partial^2 F^y}{\partial v_i^2} (0) =     \left( \sum_{k,l =1}^n \frac{\partial^2 f^x}{\partial x_l \partial x_k} (y) \frac{\partial \varphi_k^y}{\partial v_i}(0)  \frac{\partial \varphi_l^y}{\partial v_i}(0)  \right) \\
+       \left(  \sum_{k=1}^n \frac{\partial f^x}{\partial x_k} (y)\frac{\partial^2 \varphi_k^y}{\partial v_i^2} (0)\right)   - \frac{\partial^2 \psi^y}{\partial v_i^2}(0) \leq 0,
\end{multline*}
which yields
\begin{eqnarray}\label{22dec2}
\sum_{k,l =1}^n \frac{\partial^2 f^x}{\partial x_l \partial x_k} (y) \frac{\partial \varphi_k^y}{\partial v_i}(0)  \frac{\partial \varphi_l^y}{\partial v_i}(0)  & \leq & \frac{\partial^2 \psi^y}{\partial v_i^2}(0) -  \sum_{k=1}^n \frac{\partial f^x}{\partial x_k} (y)\frac{\partial^2 \varphi_k^y}{\partial v_i^2} (0) \nonumber\\
& \leq & C+ C \left| \nabla_y f^x\right|.
\end{eqnarray}
By (\ref{DEFnhscf2})  and the observation made at the very beginning of this proof, there is an orthonormal family of smooth vector fields $X^1, \ldots, X^m$ which generates the distribution $\Delta$ in a neighborhood of $z$ and such that 
\begin{eqnarray}\label{22dec4}
\left\| X^i\right\|_{C^1} \leq A \quad \mbox{and} \quad d_0\varphi^y(e_i)=\frac{\partial \varphi^y}{\partial v_i}(0)=X^i(y) \qquad \forall i=1, \ldots, m.
\end{eqnarray}
Setting $X^i=\sum_{k=1}^n a^i_k \partial_k$, we check easily that 
$$
X^i\cdot f^x = \sum_{k=1}^n a^i_k \, \frac{\partial f^x}{\partial x_k}
$$
and
$$
X^i \cdot \left(X^i \cdot f^x\right) =  \sum_{k=1}^n \left( \sum_{l=1}^n a^i_l \frac{\partial a^i_k}{\partial x_l} \right) \, \frac{\partial f^x}{\partial x_k} +  \sum_{k=1}^n a^i_k \left( \sum_{l=1}^n a^i_l  \frac{\partial^2 f^x}{ \partial x_l \partial x_k}\right). 
$$
The last expression at $y$ yields, thanks to (\ref{22dec2}) and (\ref{22dec4}) (which implies $a^i_k(y)=\frac{\partial \varphi^y_k}{\partial v_i}(0)$ for all $i=1, \ldots, m$ and $k=1, \ldots, m$)
$$
\left[X^i \cdot (X^i \cdot f^x)\right](y)  \leq A^2 \left| \nabla_y f^x\right| + C+ C \left| \nabla_y f^x\right| \qquad \forall i=1, \ldots,m.
$$
We conclude easily by smoothness of $f^x$ in $\mathcal{O}_x$ with $U^y$ sufficiently small and $B>0$ sufficiently large.
\end{proof}

In order to prove Theorem \ref{THMgen}, we need to bound from above the divergence of $f^x$ over $\mathcal{O}_x$ for all $x$ in $M$. The following holds:

\begin{lemma}\label{LEMgen3}
There is $N>0$ such that  the following property holds:
\begin{eqnarray}\label{divfxNgen}
\mbox{div}_y^{\, \mu} \left( \nabla^h f^x \right)  \leq N \qquad \forall y \in \mathcal{O}_x, \, \forall x \in M.
\end{eqnarray}
\end{lemma}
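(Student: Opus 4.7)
The plan is to compute the divergence pointwise in a local orthonormal frame of the distribution provided by Lemma \ref{LEMgen1}, and then bound each resulting term using Lemmas \ref{LEMgen1} and \ref{LEMgen2} together with the compactness of $M$.

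Fix $x\in M$ and $y\in \mathcal{O}_x$. By Lemma \ref{LEMgen1}, there exist a neighborhood $U^y\subset \mathcal{O}_x$ of $y$ and a smooth orthonormal frame $X^1, \ldots, X^m$ of $\Delta$ on $U^y$ with $\|X^i\|_{C^1}\leq B$ and
$$
\bigl[X^i\cdot (X^i\cdot f^x)\bigr](z)\leq B\,|\nabla_z f^x|+B \qquad \forall z\in U^y,\, \forall i=1,\dots,m.
$$
On $U^y$ the horizontal gradient reads $\nabla^h f^x=\sum_{i=1}^m (X^i\cdot f^x)\,X^i$, and the Leibniz rule for the divergence with respect to $\mu$ yields
$$
\mbox{div}^{\,\mu}_z(\nabla^h f^x)= \sum_{i=1}^m\Bigl[ X^i\cdot (X^i\cdot f^x)(z)+(X^i\cdot f^x)(z)\,\mbox{div}^{\,\mu}_z(X^i)\Bigr]
$$
for every $z\in U^y$, and in particular at $z=y$.

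To estimate each term at $y$, I would first invoke Lemma \ref{LEMgen2}, which gives the global Lipschitz bound $|\nabla_y f^x|\leq L$. Combined with the inequality of Lemma \ref{LEMgen1} this gives $X^i\cdot (X^i\cdot f^x)(y)\leq BL+B$. Next, since $X^i(y)$ has $g$-norm equal to one, $|(X^i\cdot f^x)(y)|=|d_yf^x(X^i(y))|\leq |\nabla_y f^x|\leq L$. It remains to bound $|\mbox{div}^{\,\mu}_y(X^i)|$ uniformly in the choice of frame and in $y$. For this I would cover the compact manifold $M$ by finitely many charts in which the smooth measure writes $\mu=\rho\,\mathcal{L}^n$ with $\rho>0$ smooth; on each such chart the identity $\mbox{div}^{\,\mu}(X^i)=\mbox{div}^{\,\mathcal{L}^n}(X^i)+\rho^{-1}\,X^i\cdot \rho$ together with the $C^1$-bound $\|X^i\|_{C^1}\leq B$ of Lemma \ref{LEMgen1} and the uniform positivity and smoothness of $\rho$ on the compact $M$ yield a constant $B'>0$ (depending only on $B$, $\mu$ and $M$) such that $|\mbox{div}^{\,\mu}_y(X^i)|\leq B'$.

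Summing over $i=1,\dots,m$ then gives
$$
\mbox{div}^{\,\mu}_y(\nabla^h f^x)\leq m\bigl(BL+B+LB'\bigr)=: N,
$$
and this bound is visibly independent of the choice of $x\in M$ and $y\in \mathcal{O}_x$, which is exactly \eqref{divfxNgen}. There is no serious obstacle here: the content of the result has already been concentrated in Lemmas \ref{LEMgen1} and \ref{LEMgen2}, and the only mild care needed is that the divergence bound on the frame vector fields $X^i$ is uniform in the choice of (local) frame, which is guaranteed by the fact that the $C^1$-norm is measured with respect to a fixed global Riemannian extension of $g$ and that $\mu$ is smooth on the compact $M$. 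Together with Proposition \ref{PROPversus}, this then delivers $\mbox{MCP}(0,N)$ and hence Theorem \ref{THMgen}.
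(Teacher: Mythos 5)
Your proof is correct and follows essentially the same route as the paper's: the same Leibniz decomposition of $\mbox{div}^{\,\mu}(\nabla^h f^x)$ in the frame supplied by Lemma \ref{LEMgen1}, with the second-order term controlled by (\ref{22dec5}) and the Lipschitz bound of Lemma \ref{LEMgen2}, and the first-order term controlled by the uniform $C^1$ bound (\ref{22dec6}) on the frame together with a uniform bound on $(X^i\cdot f^x)(y)$. The only cosmetic difference is that you bound $|(X^i\cdot f^x)(y)|$ by the Riemannian gradient bound $L$, whereas the paper invokes the horizontal eikonal equation; both are valid, and your chart-by-chart justification of the uniform bound on $\mbox{div}^{\,\mu}(X^i)$ just makes explicit what the paper leaves implicit.
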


\begin{proof}[Proof of Lemma \ref{LEMgen3}]
Let $x\in M$ and $y\in \mathcal{O}_x$ be fixed, by Lemma \ref{LEMgen1} there is a neighborhood $U^y \subset \mathcal{O}_x$ of $y$ along with an orthonormal family of smooth vector fields $X^1,\ldots, X^m$ which parametrize $\Delta$ in $U^y$ such that (\ref{22dec5}) holds. The horizontal gradient of $f^x$ in $U^y$ is given by
$$
\nabla^h_y f^x = \sum_{i=1}^m \left(X^i \cdot f^x\right) (y) X^i(y).
$$
So, we have 
$$
\mbox{div}_y^{\mu} \left( \nabla^h f^x \right) = \sum_{i=1}^m \left(X^i \cdot f^x\right) (y) \, \mbox{div}_y^{\mu} \left(X^i\right) +  \sum_{i=1}^m \left[ X^i \cdot (X^i \cdot f)\right] (y).
$$
The second term above (in the right-hand side) is bounded thanks to (\ref{22dec5}) and Lemma \ref{LEMgen2} and the first term is bounded by (\ref{22dec6})  and Lemma \ref{LEMgen2} (the quantities $\left(X^i \cdot f^x\right) (y) $ are indeed bounded by the fact that $d_{SR}(x,\cdot)$ is solution to the horizontal eikonal equation, see \cite{fr10}). 
The proof of Lemma \ref{LEMgen3} is complete.
\end{proof}

To conclude the proof of Theorem \ref{THMgen}, we observe that as the functions $f^x=d_{SR}(x,\cdot)^2/2$ are Lipschitz on $M$ the minimizing Sard conjecture is satisfied (by Proposition \ref{PROPSardminEQ}) and we note that by analyticity the the sets $\mathcal{O}_x$ are geodesically star-shaped.  Then we can apply Proposition \ref{PROPversus} together with Lemma \ref{LEMgen3}.

\section{Proof of Theorem  \ref{THMCarnot}}\label{SECproofTHMCarnot}

By Proposition \ref{PROPCarnoteq}, it is sufficient to show that (\ref{divfxNsph}) holds. By assumption the function $f^0:y\rightarrow d_{SR}(0,y)^2/2$ is locally Lipschitz on $\G\setminus \{0\}$. Thus for every relatively compact open neighrborhood $U$ of $S_{SR}(0,1)$ with $\bar{U} \subset \G\setminus \{0\}$, there is $A>0$ such that  for every $y \in \bar{U}$ and every minimizing geodesic $\gamma:[0,1] \rightarrow M$ from $0$ to $y$, there is $p\in T_0^*M$ with $|p|^*\leq A$ such that $\gamma$ is the projection of the normal extremal $\psi:[0,1] \rightarrow T^*M$ starting at $(0,p)$. Moreover, a Carnot group whose first layer is equipped with a left-invariant metric is an analytic manifold equipped with an analytic sub-Riemannian structure. Consequently, by  Proposition  \ref{PROPhsc}, the function $f^0:y\rightarrow d_{SR}(0,y)^2/2$  is $C$-nearly horizontally semiconcave in $\bar{U}$ and we can repeat the arguments used in the proof of Theorem \ref{THMgen} for $y\in \mathcal{O}_0\cap S_{SR}(0,1)$.

\appendix

\section{Notations}\label{Notations}

We list below the notations used throughout this paper, we refer the reader to the monographs \cite{abb12,cs04,montgomery02,riffordbook} for further details:
\begin{itemize}
\item $M$ is a smooth manifold of dimension $n\geq 3$.
\item $\Delta$ is a smooth totally nonholonomic distribution of rank $m< n$.
\item $g$ is a smooth metric over $\Delta$. Sometimes, we see $g$ as the restriction of a global Riemannian metric $g$ on $M$. We use the notation $\langle \cdot, \cdot \rangle$ instead of $g_x(\cdot,\cdot)$ and we denote the norm associated with $g$ by $|\cdot|$ (instead of $|\cdot|_x=g_x(\cdot,\cdot)^{1/2}$). $B_{r}(x)$ stands for the open geodesic ball of radius $r>0$ centered at $x$.  The dual norm associated with the Riemannian metric $g$ on each $T_x^*M$ is denoted  by $|p|^*=|p|_x^*$ for every $(x,p)\in T^*M$.
\item We call horizontal path any $\gamma:[0,1] \rightarrow M$ in $W^{1,2}$ which is almost everywhere tangent to $\Delta$. We denote by $W^{1,2}_{\Delta}([0,1],M)$ the set of horizontal paths $\gamma:[0,1] \rightarrow M$ endowed with the $W^{1,2}$-topology. 
\item For every $\gamma\in W^{1,2}_{\Delta}([0,1],M)$, we define the length of $\gamma$ (w.r.t. $g$) by $\mbox{length}^g(\gamma)= \int_0^1 |\dot{\gamma}(t)|\, dt$ and its energy (w.r.t. $g$) by $\mbox{energy}^g(\gamma)= \int_0^1 |\dot{\gamma}(t)|^2\, dt$.
\item For any $x,y \in M$, we denote by $d_{SR}(x,y)$ (resp. $e_{SR}(x,y)$) the infimum of lengths (resp. energies) of horizontal paths joining $x$ to $y$. We note that $e_{SR}=d_{SR}^2$. We denote the open ball and the sphere centered at $x$ with radius $r>0$ respectively by $B_{SR}(x,r)$ and $S_{SR}(x,r)$. The geodesic distance  $d_{SR}$ is said to be complete if the metric space $(M,d_{SR})$ is complete. In this case, all closed balls $\bar{B}_{SR}(x,r)$ are compact (for any $x\in M$ and any $r>0$).
\item We call minimizing geodesic from $x$ to $y$ any $\gamma\in  W^{1,2}_{\Delta}([0,1],M)$ with $\gamma(0)=x, \gamma(1)=y$ which minimizes the energy $e_{SR}(x,y)$ (and so the distance $d_{SR}(x,y)$), that is such that $\mbox{energy}^g(\gamma)=e_{SR}(\gamma)$. We note that if $d_{SR}$ is complete, then there exist minimizing geodesics between any pair of points. 
\item For every $x\in M$, we denote by $W^{1,2}_{\Delta,x}([0,1],M)$ the set of paths in $W^{1,2}_{\Delta}([0,1],M)$ starting at $x$ (that is $\gamma(0)=x$) and we define the end-point map 
$$
E_{\Delta}^x \, : \, W^{1,2}_{\Delta,x}([0,1],M) \, \longrightarrow \, M
$$
by $E_{\Delta}^x(\gamma)=\gamma(1)$. The infinite dimensional space $W^{1,2}_{\Delta,x}([0,1],M)$ has a smooth manifold structure and the end-point map $E_{\Delta}^x$  is smooth.
\item An horizontal path $\gamma \in W^{1,2}_{\Delta,x}([0,1],M)$ is called singular if it is singular with respect to the end-point map $E_{\Delta}^x$, that is if the differential $d_{\gamma}E_{\Delta}^{x,1}$ is not surjective. It is convenient to rewrite the definition of singular curves in term of singular controls. If the distribution $\Delta$ is parametrized by a family $\mathcal{F}$ of $k$ smooth vector fields $X^1, \ldots, X^k$ in a open neighborhood of $\gamma([0,1])$ and if $u\in L^2([0,1],\R^k)$ satisfies 
$$
\dot{\gamma} (t) = \sum_{i=1}^k u_i(t) \, X^i(\gamma(t)) \, \mbox{ for a.e. } t \in [0,1],
$$
then $\gamma$ is singular if and only if the control $u$ is a singular point of the smooth mapping (well-defined in an open set $\mathcal{U}$)
$$
E_{\mathcal{F}}^{x,1} \, : \, \mathcal{U} \subset L^{2}([0,1],\R^k) \,  \longrightarrow  \, M
$$
defined by
$$
E_{\mathcal{F}}^{x,1}(v) :=  \gamma_v(1) \qquad \forall v \in L^2([0,1],\R^k),
$$
where $\gamma_v$ is the curve in $W^{1,2}_{\Delta,x}([0,1],M)$ solution to the Cauchy problem
$$
\dot{\gamma}_v (t) = \sum_{i=1}^k v_i(t) \, X^i \left(\gamma_v(t)\right) \, \mbox{ for a.e. } t \in [0,1], \quad \gamma_v(0)=x.
$$
\item An horizontal path $\gamma  \in W^{1,2}_{\Delta,x}([0,1],M)$ is  singular if and only if it is the projection of an abnormal extremal $\psi: [0,1] \rightarrow T^*M$ that never intersects the zero section of $T^*M$, such that
$$
\dot{\psi}(t) = \sum_{i=1}^k u_i(t) \vec{h}^i(\psi(t)) \qquad \mbox{for a.e. } t \in [0,1],
$$
where $\mathcal{F}$ is a family of $k$ smooth vector fields $X^1, \ldots, X^k$ which parametrizes $\Delta$ in a open neighborhood of $\gamma([0,1])$ and $\vec{h}^1, \ldots, \vec{h}^k$ are the Hamiltonian vector fields associated canonically with $h^i(x,p)=p\cdot Xi(x)$ in $T^*M$. The curve $\psi$ is called an abnormal lift of $\gamma$ and $\gamma$ is said to be abnormal.
\item The Hamiltonian $H:T^*M \rightarrow \R$ associated with $(\Delta,g)$ is defined by
$$
H(x,p) := \frac{1}{2} \max \left\{ \frac{p(v)^2}{g_x(v,v)} \, \vert \, v \in \Delta(x) \setminus \{0\} \right\} \qquad \forall (x,p)\in T^*M,
$$
which coincides with 
$$
\frac{1}{2} \sum_{i=1}^m \left( p\cdot X^i(x)\right)^2,
$$
if $\Delta$ is parametrized locally by an orthonormal family $X^1, \ldots, X^m$. The Hamiltonian vector field $\vec{H}$ associated with $(\Delta,g)$ is the Hamiltonian vector field given by $H$ with respect to the canonical symplectic form on $T^*M$. In local coordinates $(x,p)$ the trajectories $\psi=(x,p)$ of $\vec{H}$ are solution to 
$$
\dot{x} = \frac{\partial H}{\partial p}(x,p), \quad \dot{p}=- \frac{\partial H}{\partial x}(x,p),
$$
we call them normal extremals. Any projection of a normal extremal is an horizontal path that is said to be normal. 
\item An horizontal path $\gamma$ is called strictly abnormal if it is abnormal (singular) and not normal.
\item For every $x\in M$, the exponential mapping $\exp_x:T_x^*M \rightarrow M$ associated with $(\Delta,g)$ at $x$ is defined by $\exp_x:=\pi (\psi_{x,p}(1))$ where $\psi_{x,p}$ is the trajectory of $\vec{H}$ starting at $(x,p)$ and $\pi :T_x^*M \rightarrow M$ is the canonical projection. 
\item A Carnot group $(\G, \star )$ of step $s$ is a simply connected Lie group whose Lie algebra $\mathfrak{g}=T_0\G$ (we denote by $0$ the identity element of $\G$) admits a nilpotent stratification of step $s$, {\it i.e.} 
\begin{eqnarray}\label{carnot0}
\mathfrak{g}  = V_1 \oplus \cdots \oplus V_s,
\end{eqnarray}
with
\begin{eqnarray}\label{carnot1}
\bigl[ V_1, V_j\bigr] = V_{j+1} \quad \forall 1\leq j \leq s, \quad V_s \neq\{0\}, \quad V_{s+1}= \{0\}.
\end{eqnarray}
By simple-connectedness of $\G$ and nilpotency of $\mathfrak{g}$, $\exp_{\G}$ is a smooth diffeomorphism, which allows to identify $\G$ with its Lie algebra $\mathfrak{g} \simeq \R^n$. If the first layer $V_1$ of $\G$ is equipped with a left-invariant metric, then there is a set of coordinates $(x_1, \ldots, x_n)$, a one-parameter family of dilations $\{\delta_{\lambda}\}_{\lambda >0}$ of the form 
$$
\delta_{\lambda} \left(x_1,\ldots, x_n\right) = \left( \lambda^{d_1} x_1,  \lambda^{d_2} x_2, \ldots, \lambda^{d_n} x_n \right)  \qquad \forall x \in \R^n,
$$ 
and a orthonormal family of left-invariant vector fields generating $V_1$ satisfying
$$
X^i \left( \delta_{\lambda}(x)\right) = \lambda^{-1} \, \delta_{\lambda} \left( X^i(x)\right) \qquad \forall \lambda >0, \, x \in \R^n.
$$
\item A function $f:U\rightarrow \R$ on a open set $U\subset M$ is called locally semiconcave if for every $x\in U$ there are a open neighborhood $V\subset U$ of $x$ and  $C>0$ such that for any $y\in V$ there is a function $\psi:M\rightarrow \R$ with $\|\psi\|_{C^2}\leq C$ such that  $f\leq V$ on $M$ and $f(y)=\psi(y)$. For every $y\in U$, $d_y^+f$ denotes the set of super-differentials of $f$ at $y$, it is the set of $\alpha \in T_x^*M$ for which there is a function of class $C^1$, $\psi:M\rightarrow \R$ such that $\psi\geq f$ on $M$,   $\psi(y)=f(y)$ and $d_y\psi=\alpha$.
\item If $f:U\rightarrow M$ is smooth on the open set $U\subset M$, $\nabla^hf$ denotes its horizontal gradient with respect to $(\Delta,g)$. For every $y\in U$, $\nabla^h_yf$ is defined as the unique $v\in \Delta(y)$ such that $d_yf (w)=\langle v,w\rangle$ for all $w\in \Delta(y)$. If $\Delta(y)$ is generated by an orthonormal family $X^1(y), \ldots, X^m(y)$, then $\nabla^h_yf= \sum_{i=1}^m (X^i \cdot f)(y) \, X^i(y)$.
\end{itemize}
 
 \section{Proof of Lemma \ref{benin4j}}\label{PROOFbenin}
 
 Let $\mathcal{K} \subset \R^l$ be a compact set and $h:[0,1] \times \mathcal{K} \rightarrow \R$ an analytic mapping such that $h(0,\kappa)=0$ for all $\kappa\in \mathcal{K}$. Let $\bar{\kappa}\in\mathcal{K}$ be fixed, then  by  \cite[Lemma 4.12 p.126]{dvdd88}, there are an integer $d>0$ and $\rho>0$ together with analytic functions  $a_1, \ldots, a_d$ on $B(\bar{\kappa},\rho)$ and  $b_1, \ldots, b_d$ on  $ [0,\rho] \times B(\bar{\kappa},\rho) $ such that 
 $$
 h(t,\kappa) = \sum_{k=1}^{d} a_k(\kappa) \, b_k (t,\kappa) \, t^k \qquad \forall (t,\kappa) \in  [0,\rho] \times B(\bar{\kappa},\rho) 
 $$
 and 
 $$
 b_k (0,\bar{\kappa})=1 \qquad \forall k =1, \ldots, d. 
 $$
 Hence, by compactness of $\mathcal{K}$ we infer  that there are an integer $\bar{d}>0$ and  $\bar{\rho}>0$ such that for every $\bar{\kappa} \in \mathcal{K}$ there are analytic functions  $a_1^{\bar{\kappa}}, \ldots, a_{\bar{d}}^{\bar{\kappa}}$ on $B(\bar{\kappa},\bar{\rho})$ and  $b_1^{\bar{\kappa}}, \ldots, b_{\bar{d}}^{\bar{\kappa}}$ on  $[0,\bar{\rho}] \times B(\bar{\kappa},\bar{\rho})$ such that
 $$
 h(t,\kappa) = \sum_{k=1}^{\bar{d}} a_k^{\bar{\kappa}}(\kappa) \, b_k^{\bar{\kappa}} (t,\kappa) \, t^k \qquad \forall (t,\kappa) \in  [0,\bar{\rho}] \times B(\bar{\kappa},\bar{\rho}) 
 $$
 and 
 $$
 b_k (t,\kappa) \in [1/2,1] \qquad  \forall (t,\kappa) \in  [0,\bar{\rho}] \times B(\bar{\kappa},\bar{\rho}) , \, \forall k =1, \ldots, \bar{d}. 
 $$
 
 Let $\tau\in (0,\bar{\rho})$ be fixed. By Cauchy-Schwarz inequality, for every $\kappa \in \mathcal{K}$ we have 
 $$
\left( \int_0^{\tau} h(t,\kappa) \, dt \right)^2 \leq  \tau \, \int_0^{\tau} h(t,\kappa)^2 \, dt
$$
with equality only if $\lambda_1 h^2(\cdot,\kappa)=\lambda_2 h(\cdot,\kappa)$ for some nonzero $(\lambda_1,\lambda_2) \in \R^2$. Since $h(0,\kappa)=0$ the case of equality may only happen whenever $h(\cdot, \kappa)\equiv0$ where there holds $\left( \int_0^{\tau} h(t,\kappa) \, dt \right)^2 \leq \nu \tau \, \int_0^{\tau} h(t,\kappa)^2 \, dt$ for all $\nu \in (0,1)$. In conclusion, by compactness of $\mathcal{K}$ if there is no $\nu \in (0,1)$ such that 
 $$
\left( \int_0^{\tau} h(t,\kappa) \, dt \right)^2 \leq \nu \, \tau \, \int_0^{\tau} h(t,\kappa)^2 \, dt \qquad \forall \kappa \in \mathcal{K},
$$
then there is a sequence $\{\kappa_l\}_l$ in $\mathcal{K}$ converging to some $\kappa \in \mathcal{K}$  such that
$$
h \left(\cdot,\kappa_l\right) \not\equiv 0 \, \, \forall l \quad \mbox{and} \quad \lim_{l \rightarrow \infty}  \frac{\left( \int_0^{\tau}  h \left(t,\kappa_l\right)  \, dt \right)^2}{ \tau \int_0^{\tau}  h \left(t,\kappa_l\right) ^2 \, dt} =  1.
$$
 Then there are analytic functions $a_1^{\kappa}, \ldots, a_{\bar{d}}^{\kappa}$, $b_1^{\kappa}, \ldots, b_{\bar{d}}^{\kappa}$ and a sequence $\{\bar{k}_l\}_l$ in $\{1, \ldots, \bar{d} \}$ such that for all $l$,
 $$
 a_{\bar{k}_l}^{\kappa} \left( \kappa_l\right) \neq 0 \quad \mbox{and} \quad \left| a_{k}^{\kappa} \left(\kappa_l\right) \right| \leq \left| a_{\bar{k}_l}^{\kappa} \left( \kappa_l\right) \right| \quad \forall k \in \left\{1, \ldots, \bar{d}\right\},
 $$
  which allows to write
   $$ 
    \lim_{l \rightarrow \infty}  \frac{\left( \int_0^{\tau}    \sum_{k=1}^{\bar{d}} \frac{a_k^{\kappa} \left(\kappa_l\right)}{a_{\bar{k}_l}^{\kappa}\left(\kappa_l\right)} \, b_k^{\kappa} \left(t,\kappa_l\right) \, t^k    \, dt \right)^2}{ \tau \int_0^{\tau}   \left( \sum_{k=1}^{\bar{d}} \frac{a_k^{\kappa} \left(\kappa_l\right)}{a_{\bar{k}_l}^{\kappa}\left(\kappa_l\right)} \, b_k^{\kappa} \left(t,\kappa_l\right) \, t^k \right) ^2 \, dt} =  1.
 $$
 Therefore, since all the quantities $ a_k^{\kappa} \left(\kappa_l\right) /a_{\bar{k}_l}^{\kappa}\left(\kappa_l\right)$ belong to $[-1,1]$ for all $k$ and are equal to $1$ for $k=\bar{k}_l$, and since the functions $b_1(\cdot,\kappa), \ldots, b_{\bar{d}}(\cdot, \kappa)$ are analytic, there are 
  $c_1, \ldots c_{\bar{d}}$ in $[-1,1]$ and $\bar{k} \in \{1, \ldots, \bar{d}\}$ with $c_{\bar{k}}=1$ such that 
$$
   \left( \int_0^{\tau}   \sum_{k=1}^{\bar{d}} c_k \, b_k^{\kappa} \left(t,\kappa \right) \, t^k  \, dt \right)^2 =  \tau \int_0^{\tau}  \left(  \sum_{k=1}^{\bar{d}} c_k \, b_k^{\kappa} \left(t,\kappa \right) \, t^k \right)^2 \, dt.
$$
This means that there is a  nonzero pair $(\lambda_1,\lambda_2) \in \R^2$  such that 
 $$
\lambda_1 \,   \left(  \sum_{k=1}^{\bar{d}} c_k \, b_k^{\kappa} \left(t,\kappa \right) \, t^k \right)^2 = \lambda_2 \,   \left(  \sum_{k=1}^{\bar{d}} c_k \, b_k^{\kappa} \left(t,\kappa \right) \, t^k \right) \qquad \forall t \in [0,\tau].
 $$
which implies that 
$$
 \sum_{k=1}^{\bar{d}} c_k \, b_k^{\kappa} \left(t,\kappa \right) \, t^k = 0 \qquad \forall t\in [0,\tau].
$$
Let $k_0 \in \{1, \ldots, \bar{d}\}$ be such that $c_k=0$ for all $k\in \{1, \ldots, \bar{d}-1\}$ with $k<k_0$. Then we have 
 $$
c_{k_0} \,  b_{k_0}  \left(t,\kappa \right) \, t^{k_0} +  t^{k_0} \left( \sum_{k=k_0+1}^{\bar{d}} c_k \, b_k \left(t,\kappa \right) \, t^{k-k_0}\right) = 0 \qquad \forall t\in [0,\tau],
$$
which is impossible because $b_0(0,\kappa)\in [1/2,1]$. The proof is complete.
\addcontentsline{toc}{section}{References}


\begin{thebibliography}{99}

\bibitem{agrachev01} 
A.~Agrachev.
\newblock Compactness for Sub-Riemannian length-minimizers and subanalyticity. 
\newblock Control theory and its applications (Grado, 1998).
\newblock {\em Rend. Sem. Mat. Univ. Politec. Torino}, 56(4):1--12, 2001.

\bibitem{agrachev09} 
A.~Agrachev.
\newblock Any sub-Riemannian metric has points of smoothness. 
\newblock {\em Dokl. Akad. Nauk}, 424(3): 295--298, 2009
\newblock Translation in {\em Dokl. Math.}, 79(1):45--47, 2009.

\bibitem{agrachev14} 
A.~Agrachev.
\newblock Some open problems.
\newblock {\em Geometric control theory and sub-Riemannian geometry}, 1-13.
\newblock Springer INdAM, Ser. 5, Springer, Cham, 2014.

\bibitem{abb12}
A.~Agrachev, D.~Barilari and U.~Boscain.
\newblock {\em Introduction to Riemannian and sub-Riemannian geometry}.
\newblock Book in preparation, 2017.

\bibitem{abr18}
A.~Agrachev, D.~Barilari and L.~Rizzi.
\newblock {\em Curvature: a variational approach}.
\newblock Memoirs of the American Mathematical Society, 256, no 1225, 2018.

\bibitem{al09}
A.~Agrachev and P.~Lee.
\newblock Optimal transportation under nonholonomic constraints.
\newblock  {\em Trans. Amer. Math. Soc.}, 361(11):6019--6047, 2009.

\bibitem{al14}
A.~Agrachev and P.~Lee.
\newblock Generalized Ricci curvature bounds for three dimensional contact subriemannian manifolds.
\newblock  {\em Math. Ann.}, 360(1-2):209--253, 2014.


\bibitem{as99}
A.~Agrachev and A.~Sarychev.
\newblock Sub-Riemannian metrics: minimality of singular geodesics versus subanalyticity.
\newblock  {\em ESAIM Control Optim. Calc. Var.}, 4:377--403, 1999.

\bibitem{badreddine17}
Z.~Badreddine.
\newblock Mass transportation on sub-Riemannian structures of rank two in dimension four.
\newblock Preprint, 2017.

\bibitem{badreddinethesis}
Z.~Badreddine.
\newblock {\em Mass transportation in sub-Riemannian structures admitting singular minimizing geodesics}.
\newblock Thesis, Universit\'e de Bourgogne - Franche Comt\'e, 2017.

\bibitem{br17}
D.~Barilari and L.~Rizzi.
\newblock Sharp measure contraction property for generalized $H$-type Carnot groups.
\newblock {\em Commun. Contemp. Math.}, to appear.


\bibitem{br16}
A.~Belotto da Silva and L.~Rifford.
\newblock The Sard conjecture on Martinet surfaces.
\newblock {\em Duke Math. J.}, to appear.


\bibitem{cr08}
P.~Cannarsa and L.~Rifford.
\newblock Semiconcavity results for optimal control problems admitting
no singular minimizing controls.
\newblock {\em Ann. Inst. H. Poincar\'e Non Lin\'eaire}, 25(4):773--802, 2008.

\bibitem{cs04}
P.~Cannarsa and C.~Sinestrari.
\newblock {\em Semiconcave functions, Hamilton-Jacobi equations, and optimal control}.
\newblock Progress in Nonlinear Differential Equations and their Applications, 58. Birkh\"auser Boston Inc., Boston, MA, 2004.

\bibitem{ch15}
F.~Cavalletti and M.~Huesmann.
\newblock Existence and uniqueness of optimal transport maps.
\newblock {\em Ann. Inst. H. Poincar\'e Non Lin\'eaire}, 32(6):1367--1377, 2015.


\bibitem{dvdd88}
J.~Denef and L.~Van den Dries.
\newblock $p$-adic and real subanalytic sets.
\newblock {\em Ann. of Math. (2)}, 128(1):79--138, 1988.


\bibitem{fr10}
A.~Figalli and L.~Rifford.
\newblock Mass Transportation on sub-Riemannian Manifolds.
\newblock {\em Geom. Funct. Anal.}, 20(1):124--159, 2010.


\bibitem{juillet09}
N.~Juillet.
\newblock Geometric inequalities and generalized Ricci bounds in the Heisenberg group.
\newblock {\em Int. Math. Res. Not. IMRN}, 13:2347--2373, 2009. 


\bibitem{lee15}
P.W.Y.~Lee.
\newblock Ricci curvature lower bounds on Sasakian manifolds. 
\newblock ArXiv e-prints, 2015.

\bibitem{lee16}
P.W.Y.~Lee.
\newblock On measure contraction property without Ricci curvature lower bound. 
\newblock {\em Potential Anal.}, 44:(1):27--41, 2016.

\bibitem{lcz16}
P.W.Y.~Lee, C.~Li and I.~Zelenko.
\newblock Ricci curvature type lower bounds for sub-Riemannian structures on Sasakian manifolds. 
\newblock {\em Discrete Contin. Dyn. Syst.}, 36:(1):303--321, 2016.

\bibitem{lv07}
J.~Lott and C.~Villani.
\newblock Weak curvature conditions and functional inequalities.
\newblock {\em J. Funct. Anal.}, 245(1):311--333, 2007. 


\bibitem{montgomery02}
R.~Montgomery.
\newblock {\em A tour of sub-Riemannian geometries,
their geodesics and applications}.
\newblock {\em Mathematical Surveys and Monographs}, Vol.\ 91.
\newblock American Mathematical Society, Providence, RI, 2002.

\bibitem{mm16}
A.~Montanari and D.~Morbidelli.
\newblock On the lack of semiconcavity of the subRiemannian distance in a class of Carnot groups.
\newblock {\em J. Math. Anal. Appl.}, 444(2):1652--1674, 2016. 


\bibitem{ohta07}
S.~Ohta.
\newblock On the measure contraction property of metric measure spaces.
\newblock {\em Comment. Math. Helv.}, 82(4):805--828, 2007.

\bibitem{ohta14}
S.~Ohta.
\newblock On the curvature and heat flow on Hamiltonian systems.
\newblock {\em Anal. Geom. Metr. Spaces}, 2:81--114, 2014.


\bibitem{riffordbook}
L.~Rifford.
\newblock {\em Sub-Riemannian Geometry and Optimal Transport}.
\newblock Springer Briefs in Mathematics, Springer, New York, 2014.

\bibitem{rifford13}
L.~Rifford.
\newblock Ricci curvatures in Carnot groups.
\newblock {\em Math. Control Relat. Fields}, 3(4):467--487, 2013.

\bibitem{riffordbourbaki}
L.~Rifford.
\newblock Singuli\`eres minimisantes en g\'eom\'etrie sous-Riemannienne.
\newblock S\'eminaire Bourbaki, Vol. 2015/2016. Expos\'es 1101-1119.
\newblock {\em Ast\'erisque}, Vol. 390, Exp. No. 1179: 277-301, 2017.

\bibitem{rt05}
L.~Rifford and E.~Tr\'elat.
\newblock Morse-Sard type results in sub-Riemannian geometry. 
\newblock {\em Math. Ann.}, 332(1):145--159, 2005.

\bibitem{rizzithesis}
L.~Rizzi.
\newblock {\em The curvature of optimal control problems with applications to sub-Riemannian geometry}.
\newblock Thesis, Scuola Internazionale di Studi Avanzati, 2014.

\bibitem{rizzi16}
L.~Rizzi.
\newblock Measure contraction properties of Carnot groups.
\newblock {\em Calc. Var. Partial Differential Equations}, 55(3), Art. 60, 20pp., 2016.

\bibitem{sturm06a}
K.~-T.~Sturm.
\newblock On the geometry of metric measure spaces. I. 
\newblock {\em Acta Math.}, 196(1):65--131, 2006.

\bibitem{sturm06b}
K.~-T.~Sturm.
\newblock On the geometry of metric measure spaces. II. 
\newblock {\em Acta Math.}, 196(1):133--177, 2006.

\bibitem{villanibook} 
C.~Villani.
\newblock {\em Optimal transport, Old and New}.
\newblock Grundlehren der Mathematischen Wissenschaften, 338. Springer-Verlag, Berlin, 2009.


\end{thebibliography}
\end{document}